\title{Computing quaternion quotient graphs via representation of orders}
\author{Luis Arenas-Carmona\thanks{Supported by Fondecyt,
proyecto No. 1120565.}
\\Universidad de Chile,
\\ Facultad de Ciencias,
\\Casilla 653, Santiago,Chile
\\E-mail: learenas@uchile.cl}
\begin{document}

\theoremstyle{plain}
\newtheorem{thm}{Theorem}
\newtheorem{prop}{Proposition}[section]
\newtheorem{prop3}{Proposition}[subsection]
\newtheorem{cor}{Corollary}[prop]
\newtheorem{lem}[prop]{Lemma}
\newtheorem{lem3}[prop]{Lemma}
\theoremstyle{definition}
\newtheorem{ex}[prop]{Example}
\newtheorem{dfn}[prop]{Definition}
\newtheorem{rmk}[prop]{Remark}

\newcommand\classgp{\mathfrak{g}}
\newcommand\anything{*}
\newcommand\sgn{\textnormal{sgn}}
\newcommand\diag{\textnormal{diag}}
\newcommand\disc{\textnormal{disc}}
\newcommand\ad{\mathbb{A}}
\newcommand\Z{\mathbb{Z}}
\newcommand\lu{\mathfrak{L}}
\newcommand\compleji{\mathbb{C}}
\newcommand\alge{\mathfrak{A}}
\newcommand\tr{\textnormal{ tr}}
\newcommand\la{\Lambda}
\newcommand\ideala{\mathcal{A}}
\newcommand\idealb{\mathcal{B}}
\newcommand\bark{\bar{k}}
\newcommand\uno{\{1\}}
\newcommand\quadc{k^*/(k^*)^2}
\newcommand\Ba{\mathfrak B}
\newcommand\oink{\mathcal O}
\newcommand\D{\mathcal D}
\newcommand\nuD{\nu_{\D}}
\newcommand\Q{\mathbb Q}
\newcommand\enteri{\mathbb Z}
\newcommand\gal{\mathcal G}
\newcommand\dete{\textnormal{det}}
\newcommand\ovr[2]{#1|#2}
\newcommand\vaa{\longrightarrow}
\newcommand\Da{\mathfrak{D}}
\newcommand\Ha{\mathfrak{H}}
\newcommand\Ea{\mathfrak{E}}
\newcommand\Ma{\mathfrak{M}}
\newcommand\ma{\mathfrak{m}}
\newcommand\Txi{\lceil}

\newcommand\bmattrix[4]{\left(\begin{array}{cc}#1&#2\\#3&#4\end{array}\right)}
\newcommand\sbmattrix[4]{\textnormal{\scriptsize$\left(\begin{array}{cc}#1&#2\\#3&#4\end{array}\right)$\normalsize}}
\newcommand\blattice[4]{\left<\begin{array}{cc}#1&#2\\#3&#4\end{array}\right>}
\newcommand\bspace[4]{\left[\begin{array}{cc}#1&#2\\#3&#4\end{array}\right]}

\newcommand\wq{\mathfrak{q}}
\newcommand\uink{\mathcal U}
\newcommand\qunitaryomega{\uink}
\newcommand\quniti[4]{\rm{SU}^{#4}}
\newcommand\qunitarylam{\quniti nkh{\la}}

\newcommand\spinomega{Spin_{n}(Q)}
\newcommand\hspinomega{Spin_{n}(\D,h)}

\newcommand\matrici{\mathbb{M}}
\newcommand\finitum{\mathbb{F}}

\newcommand\splitt[1]{\matrici_2(#1)}
\newcommand\splitk{\splitt k}

\newcommand\Hgot{\mathfrak{H}}

\maketitle

\begin{abstract}
We study the correspondence assigning the vertices of a certain quotient of the local Bruhat-Tits tree for $\mathrm{PGL}_2(K)$, where $K$ is a global function field, to conjugacy classes of maximal orders in some quaternion
$K$-algebras. The interplay between quotient graphs and orders can be used to study representation of orders if the
quotient graphs are known and conversely. We use this converse to find a reciprocity law between quotient graph at different places that suffices to compute, recursively, all local quotient graphs when $K$ is rational and the quaternion algebra splits.
\end{abstract}


\section{Introduction}\label{intro}

 In the late seventies, J.-P. Serre and H. Bass showed that the structure of a group $\Gamma$ acting on a tree $T$ can be recovered from the structure of the quotient graph $\Gamma\backslash T$ \cite[Ch.I]{trees}. This theory, now known as Bass-Serre Theory, was used to find generators of certain arithmetic subgroups $\Gamma$ of $\mathrm{PGL}_2(K)$.  \cite[Ch.II]{trees} is mostly concerned with the case
$\Gamma=\mathrm{PGL}_2(A)$ for  the ring $A=A_P$ of functions that are regular outside a single place $P$ of a smooth irreducible curve $X$, with field of constants $K=K(X)$. Using this method, Serre generalized Nagao's Theorem, which expresses $\mathrm{PGL}_2(\finitum[t])$, for any field $\finitum$, as a free product with amalgamation \cite[Ch.II, Th.6]{trees}. He gave the following structural result for these quotient graphs \cite[Ch.II, Th.9]{trees}:
\begin{quote}
\textbf{Theorem S:} The graph  $\Gamma\backslash T_P$, where $T_P$ is the local Bruhat-Tits tree for the group $\mathrm{PGL}_2(K)$ at $P$, is obtained by attaching a finite number of cusps, or infinite half lines, to a certain finite
graph $Y$. The set of such cusps are indexed by the elements in the Picard group $\mathrm{Pic}(X)$.
\end{quote}
Serre also determined the explicit structure of the quotient graph in some specific examples \cite[\S II.2.4]{trees}. 
The proof of Theorem \textbf{S} relies heavily on the fact that the vertices of $T_P$  are in correspondence with certain equivalence classes of vector bundles.
A.W. Mason has given a more elementary proof of these facts  \cite{Mason1}, \cite{Mason2} and applied these graphs to the study of the lowest index non-congruence subgroup of  $\mathrm{PGL}_2(A)$, in a series of joint works with A. Schweitzer \cite{Mason3}, \cite{Mason4}. A few additional quotient graphs are described in
\cite{Mason5} and \cite{takahashi}. M. Papikian has studied the case where $\mathrm{PGL}_2(A)$ is replaced by the group $\mathrm{PGL}_1(D)$, where $D$ is a maximal $A$-order in a quaternion division algebra $\alge$ \cite{Papikian}.

In this article we study family of quotient graphs that classify maximal $X$-orders on a quaternion $K$-algebra $\alge$ splitting at $P$. Since we use the theory of representation fields, we limit ourselves to curves $X$ defined over a finite field $\finitum$. Recall that an $X$-order in $\alge$ is a locally free sheaf of $\oink_X$-algebras whose generic fibre is $\alge$ \cite{abelianos}, \cite{brzezinski87}. These quotient graphs are closely related to the graph $\Gamma\backslash T_P$ studied by Serre, Mason, and Papikian. Let $G=G_P$ be the conjugation stabilizer $G=\mathrm{Stab}_{\alge^*}(D)$, for a maximal $A$-order $D$.  Note that $\Gamma=K^*D^*/K^*$ is a normal subgroup of $G$,
whence the group $G/\Gamma$ acts on $\Gamma\backslash T_P$, and
$G\backslash T_P$ is the quotient graph under this action. We call
$C_P(D)=G\backslash T_P$ the classifying graph, or C-graph of $D$ at $P$, while
$S_P(D)=\Gamma\backslash T_P$ is called the $S$-graph of $D$ in this
work. Note that $\Gamma=PGL_2(A)$ when $D=\matrici_2(A)$.

Let $\Da$ be a maximal $X$-order in $\alge$. Such an order is completely determined by the
completion $\Da_Q$ at every closed place $Q\in X$.
Furthermore, the completion at any finite set of closed places can
be modified to define a new order. In particular, an $A$-order can be extended to an $X$-order by
chosing an arbitrary completion at $P$. An order is maximal if it is
maximal at all places. It follows that the set of maximal orders $\Da$ with a fixed restriction $\Da(U)=D$ to the affine
open subset $U=X\backslash\{P\}$ is in correspondence with the vertices of the local Bruhat-Tits tree $T_P$,
and isomorphism classes of such orders are in correspondence with the vertices of $C_P(D)$. In what follows we write
$C_P(\Da)$ or $S_P(\Da)$ instead of $C_P(D)$ or $S_P(D)$.

Recall that the set $\mathbb{O}$ of maximal $X$-orders in $\alge$ can be split into
spinor genera \cite{abelianos}. There exists an abelian extension $\Sigma/K$ of exponent 2, called the spinor class field,
such that spinor genera can be classified by a distance function $$ \rho:\mathbb{O}\times\mathbb{O}\rightarrow
\mathrm{Gal}(\Sigma/K),$$ i.e., $\Da$ and $\Da'$ are in the same spinor genera if and only if $\rho(\Da,\Da')=\mathrm{Id}_\Sigma$. 
 Spinor genera of $A$-orders are just isomorphism classes when the set of
infinite places of $A$ has strong approximation. This is not the case for $X$-orders. However, spinor genera
still plays an important role in the present setting:

 \begin{thm}\label{th1} In the preceeding notations, the
set of vertices of $C_P(\Da)$ is in
correspondence with the isomorphism classes of two spinor genera of maximal $X$-orders if the Artin symbol $|[P,\Sigma/K]|$ is not trivial on $\Sigma$ and one spinor genera otherwise.
In the former case, each C-graph is bipartite. 
\end{thm}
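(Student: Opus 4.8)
\emph{Proof plan.} The strategy I would follow is to transport the combinatorial description of $C_P(\Da)$ into the arithmetic of the reduced norm at $P$. Fix once and for all a vertex $\Da_0$ of $T_P$, i.e.\ a maximal $X$-order with $\Da_0(U)=D$, let $v_0\in T_P$ be the corresponding point, write $N$ for the reduced norm of $\alge$ and $\alge_\ad^*$ for the idele group of $\alge$. I shall use two facts from \cite{abelianos}. First, for maximal $X$-orders $\Da,\Da'$ one has $\rho(\Da,\Da')=\prod_Q[N(g_Q),\Sigma/K]$, where $g_Q\in\alge_Q^*$ is any element with $g_Q\Da_Q g_Q^{-1}=\Da'_Q$, the bracket is the idelic Artin map, and $\Sigma$ is the class field of $K^*\mathfrak H$ with $\mathfrak H=\prod_Q N(\mathcal N_Q)$ and $\mathcal N_Q$ the normalizer of $\Da_{0,Q}$ in $\alge_Q^*$. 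Second, since $\alge$ splits at $P$ we have $N(\mathrm{GL}_2(\oink_P))=\oink_P^*\subseteq N(\mathcal N_P)$, so $\Sigma/K$ is unramified at $P$ and, being of exponent $2$, $\Sigma_P/K_P$ is trivial if $P$ splits in $\Sigma$ (i.e.\ $[P,\Sigma/K]$ trivial on $\Sigma$) and the unramified quadratic extension otherwise. Note also that all $X$-orders occurring as vertices of $T_P$ are locally isomorphic everywhere (all maximal orders of $\matrici_2(K_P)$ are conjugate), hence lie in one genus, so it makes sense to ask which of its spinor genera they meet.

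\emph{Step 1: which spinor genera occur among the vertices.} Let $\Da$ be the vertex of $T_P$ at a point $v$. Since $\Da_Q=\Da_{0,Q}$ for all $Q\neq P$, I may take $g_Q=1$ off $P$ in the formula above, whence $\rho(\Da_0,\Da)=[N(g_P),\Sigma_P/K_P]$. Here $g_P\in\alge_P^*=\mathrm{GL}_2(K_P)$ sends $v_0$ to $v$, so by the standard parity property of the $\mathrm{GL}_2(K_P)$-action on its tree, $v_P(N(g_P))\equiv d_{T_P}(v_0,v)\pmod 2$; as $\Sigma_P/K_P$ is unramified, the Artin symbol depends only on this parity, so $\rho(\Da_0,\Da)=[P,\Sigma/K]^{\,d_{T_P}(v_0,v)}$. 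Hence, if $[P,\Sigma/K]$ is trivial on $\Sigma$, all vertices of $T_P$ lie in one spinor genus; if not, $[P,\Sigma/K]$ has order $2$ on $\Sigma$ and the vertices of $T_P$ meet exactly two spinor genera, two of them lying in the same one precisely when their distance in $T_P$ is even.

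\emph{Step 2: all isomorphism classes of those spinor genera occur.} This is the crux. I would show that every maximal $X$-order $\Da'$ in the spinor genus of $\Da_0$ is globally isomorphic to a vertex of $T_P$. Choose $b=(b_Q)\in\alge_\ad^*$ with $b\Da_0 b^{-1}=\Da'$ and $b_P=1$; then $\rho(\Da_0,\Da')=\mathrm{Id}_\Sigma$ says exactly that the idele $N(b)$ lies in $K^*\mathfrak H$, say $N(b)=c\mu$ with $c\in K^*$, $\mu\in\mathfrak H$. Using that the reduced norm $\alge^*\to K^*$ is onto (no archimedean places), pick $\gamma_0\in\alge^*$ with $N(\gamma_0)=c^{-1}$, and then $n\in\prod_Q\mathcal N_Q$ with $N(n_Q)=N(\gamma_0 b)_Q$ for every $Q$; put $h=\gamma_0 b\,n^{-1}$. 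Then $h$ has reduced norm $1$ at every place, $h\Da_0 h^{-1}=\gamma_0\Da'\gamma_0^{-1}$ is globally isomorphic to $\Da'$, and $h_Q\in\mathcal N_Q$ for all but finitely many $Q\neq P$. Since $\alge$ splits at $P$, strong approximation for the norm-one group $\alge^1$ relative to $\{P\}$ produces $\gamma_1\in\alge^1$ lying in the nonempty open coset $h_Q(\mathcal N_Q\cap\alge_Q^1)$ at the finitely many places where $h_Q\notin\mathcal N_Q$, and in $\mathcal N_Q$ at every other $Q\neq P$. Then $\gamma_1^{-1}\gamma_0\,\Da'\,\gamma_0^{-1}\gamma_1$ is globally isomorphic to $\Da'$ and equals $\Da_{0,Q}=D_Q$ at every $Q\neq P$, hence is a vertex of $T_P$. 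Conversely, two vertices of $T_P$ are globally isomorphic precisely when they are $G$-conjugate; combined with the already-recalled identification of the vertices of $C_P(\Da)$ with the isomorphism classes of maximal $X$-orders restricting to $D$, and with Step 1, this gives the claimed description of the vertex set of $C_P(\Da)$.

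\emph{Step 3: bipartiteness, and the main obstacle.} Assume $[P,\Sigma/K]$ is nontrivial on $\Sigma$. Every edge of $C_P(\Da)=G\backslash T_P$ is the image of an edge of $T_P$, whose two endpoints are at distance $1$ and hence, by Step 1, lie in different spinor genera; so colouring each vertex of $C_P(\Da)$ by the spinor genus of the order it represents is a proper $2$-colouring, and $C_P(\Da)$ is bipartite. (Equivalently: for $\gamma\in G$ and $Q\neq P$ the element $\gamma$ normalizes $D_Q$, so $N(\gamma)\in N(\mathcal N_Q)$ there; hence the idele supported at $P$ with $P$-component $N(\gamma)$ lies in $K^*\mathfrak H$, and since $\Sigma_P/K_P$ is unramified quadratic this forces $v_P(N(\gamma))$ to be even, i.e.\ $G$ preserves the bipartition of $T_P$.) The only genuinely delicate point is Step 2 — converting the numerical identity $\rho(\Da_0,\Da')=\mathrm{Id}_\Sigma$ into an honest global conjugation carrying $\Da'$ to an order restricting to $D$ on $U$ — which is exactly where strong approximation for $\alge^1$ at the splitting place $P$ is used, and where care is needed to realise the finitely many required local corrections by a single norm-one global element without spoiling the already-correct behaviour at the remaining places; everything else is bookkeeping with the Artin map and with parities of distances in $T_P$.
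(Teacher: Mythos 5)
Your argument is correct and follows essentially the same route as the paper's: identify the vertices of $T_P$ with the maximal orders agreeing with $D$ on $U=X\setminus\{P\}$ and their isomorphism classes with $G$-orbits, compute that two neighbours in $T_P$ are at spinor distance $|[P,\Sigma/K]|$ (which gives both the one-versus-two spinor genera dichotomy and the bipartiteness), and use the fact that two such orders whose distance is trivial on $\Sigma_U$ are isomorphic over $U$. The only substantive difference is that your Step 2 proves this last fact directly from strong approximation for $\alge^1$ at the split place $P$, whereas the paper simply cites it from its references on spinor class fields; two small repairs are needed there --- you cannot in general impose $b_P=1$ simultaneously with $b\Da_0b^{-1}=\Da'$ (drop that normalization: the final conjugate still agrees with $D$ at every $Q\neq P$ and hence is \emph{some} vertex of $T_P$, which is all you need), and the argument must also be run for the second spinor genus when $|[P,\Sigma/K]|$ is nontrivial, e.g.\ by replacing $\Da_0$ with one of its neighbours in $T_P$.
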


It follows that the number of
connected graphs that are needed to describe all isomorphism
classes of maximal orders is either $[\Sigma:K]$ or
$[\Sigma:K]/2$. We call the disjoint
union of these graphs the Full  C-graph $C_P=C_P(\alge)$.
The full S-graph $S_P$ is defined analogously.

When $\alge\cong\matrici_2(K)$, by a split maximal order we mean
a conjugate of the sheaf:
$$\Da_B=\bmattrix {\oink_X}{\mathfrak{L}^B}{\mathfrak{L}^{-B}}{\oink_X},$$
where $B$ is an arbitrary divisor on $X$ and $\mathfrak{L}^B$ is
the invertible sheaf defined by
$$\mathfrak{L}^B(U)=\left\{f\in K\Big|\mathrm{div}(f)|_U+B|_U\geq0\right\}.$$
The cusps in Serre's description of the $S$-graph are explicitly described in terms of
 the vector bundles corresponding to
the orders $\Da_B$ for  large enough values of $|\mathrm{deg}(B)|$ \cite[Ch. II]{trees}. In this sense, next theorem is a partial refinement of Serre's result:

 \begin{thm}\label{th2}
The split maximal orders are located in a finite disjoint union of
infinite lines or half-lines. The set of such lines in the Full C-graph is in  correspondence with
the pairs of the form $\{a,-a\}$ in the quotient group $\mathrm{Pic(X)}/\langle\bar P\rangle$, where $\bar P\in\mathrm{Pic(X)}$ denotes the class of $P$. A split order $\Da_B$ is in the line corresponding to $\{\bar{B},-\bar{B}\}$. The half lines correspond to the elements of order $2$.
\end{thm}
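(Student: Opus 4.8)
The plan is to push everything through the dictionary between vertices of $T_P$ and rank-$2$ vector bundles on $X$, and then to do the bookkeeping inside $\mathrm{Pic}(X)$.

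\textbf{Step 1 (dictionary; split $=$ decomposable).} We may assume $\alge=\matrici_2(K)$. A maximal $X$-order in $\alge$ is of the form $\mathrm{End}(\mathcal E)$ for a rank-$2$ locally free sheaf $\mathcal E$, with $\mathrm{End}(\mathcal E)\cong\mathrm{End}(\mathcal E')$ as sheaves of $\oink_X$-algebras if and only if $\mathcal E'\cong\mathcal E\otimes\mathcal L$ for some $\mathcal L\in\mathrm{Pic}(X)$ (for rank $2$ the opposite algebra is again of this form, since $\mathcal E^{\vee}\cong\mathcal E\otimes(\det\mathcal E)^{-1}$); fixing $\Da(U)=D$ amounts to fixing $\mathcal E|_U$, so the vertices of $T_P$ are the rank-$2$ sheaves extending a given $\mathcal E|_U$, taken modulo twist by the sheaves $\oink_X(nP)$, and the vertices of the Full $C$-graph $C_P$ are the isomorphism classes of rank-$2$ sheaves modulo $\mathrm{Pic}(X)$. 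Now $\Da_B=\mathrm{End}(\mathfrak{L}^{B}\oplus\oink_X)$, so the split vertices are exactly those represented by a decomposable sheaf $\mathcal L_1\oplus\mathcal L_2$; by the Krull--Schmidt property for coherent sheaves on the projective curve $X$ the summands are determined, so the split vertices are in bijection with the unordered pairs $\{c,c^{-1}\}$, $c\in\mathrm{Pic}(X)$, via $\mathcal L_1\oplus\mathcal L_2\mapsto\{[\mathcal L_1\mathcal L_2^{-1}],[\mathcal L_2\mathcal L_1^{-1}]\}$, and $\Da_B\mapsto\{\bar B,-\bar B\}$.

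\textbf{Step 2 (apartments give the lines).} Given a decomposable $\mathcal E=\mathcal L_1\oplus\mathcal L_2$, the completions of $\mathcal E$ at $P$ respecting the decomposition — the $\oink_P$-lattices $\oink_P\oplus\pi_P^{\,n}\oink_P$ in the two summands — run over an apartment $\mathcal A$ of $T_P$, a geodesic line whose $n$-th vertex extends to $\mathcal E_n:=\mathcal L_1\oplus\mathcal L_2(nP)$. Every split vertex lies on such an apartment (take $n=0$) and every vertex of $\mathcal A$ is split, so the split locus of $C_P$ is the union of the images of the apartments $\mathcal A$. The image of $\mathcal A$ visits the classes $\{c_n,-c_n\}$ with $c_n=[\mathcal L_1\mathcal L_2^{-1}]-n\bar P$. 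Here the one structural input is that $\bar P$ has positive degree, hence infinite order in $\mathrm{Pic}(X)$; thus the $c_n$ are pairwise distinct, $\mathcal A\to C_P$ is at worst two-to-one, and an identification of $\mathcal E_n$ with $\mathcal E_{n'}$ ($n\neq n'$) can only come from swapping the roles of $\mathcal L_1,\mathcal L_2$, i.e. from $\left(\begin{smallmatrix}0&1\\1&0\end{smallmatrix}\right)\in G$, so only when $c_n=-c_{n'}$, equivalently $2[\mathcal L_1\mathcal L_2^{-1}]\in\langle\bar P\rangle$. Hence the image of $\mathcal A$ is an infinite line when $2[\mathcal L_1\mathcal L_2^{-1}]\notin\langle\bar P\rangle$ and an infinite half-line otherwise, the fold occurring at the (unique) vertex where $\mathcal E$ becomes $\mathcal M^{\oplus2}$ up to twist. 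It remains to check that these images are full subgraphs of $C_P$, i.e. that a split vertex has no split neighbour off its apartment: one verifies that a nontrivial elementary modification of $\mathcal L_1\oplus\mathcal L_2$ at $P$ along a direction other than the two coordinate lines is either indecomposable (an $\mathrm{Ext}^1(\mathcal L_i,\mathcal L_j(-P))$ computation) or already isomorphic to an apartment vertex, so the only split neighbours are $\mathcal L_1(\pm P)\oplus\mathcal L_2$. I expect this last point to be the main obstacle; everything else is bookkeeping.

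\textbf{Step 3 (the parametrization).} Two apartments attached to $\mathcal L_1\oplus\mathcal L_2$ and $\mathcal L_1'\oplus\mathcal L_2'$ have the same image in $C_P$ if and only if their vertex sets of isomorphism classes coincide, and by the distinctness argument of Step 2 this holds if and only if $[\mathcal L_1\mathcal L_2^{-1}]\equiv\pm[\mathcal L_1'(\mathcal L_2')^{-1}]\pmod{\langle\bar P\rangle}$, i.e. if and only if they determine the same pair $\{a,-a\}$ in $\mathrm{Pic}(X)/\langle\bar P\rangle$; distinct pairs give disjoint vertex sets, and by Krull--Schmidt no split vertex lies on two apartments, so the lines and half-lines are pairwise disjoint. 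Every pair $\{a,-a\}$ occurs: taking a divisor $B$ with $\bar B=a$ and $\mathcal L_1=\mathfrak{L}^{B}$, $\mathcal L_2=\oink_X$ gives a line through $\Da_B$, which yields the stated parametrization and the last two assertions. Since $\mathrm{Pic}^0(X)$ is finite and $\deg P\geq1$, the group $\mathrm{Pic}(X)/\langle\bar P\rangle$ is finite, so there are finitely many such lines. Finally, by Step 2 the line attached to $\{a,-a\}$ is a half-line exactly when $2a=0$ in $\mathrm{Pic}(X)/\langle\bar P\rangle$, which is the asserted description of the half-lines; the rest of the structure is driven entirely by the single fact that $[P]$ has infinite order in $\mathrm{Pic}(X)$.
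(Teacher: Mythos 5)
Your Steps 1 and 3, together with the first half of Step 2, reproduce the paper's own argument: the paper likewise realizes each split order as a vertex of the apartment fixed by the maximal order $\oink_Xe\oplus\oink_X(1-e)$ of a split commutative subalgebra, identifies the vertices of that apartment with the orders $\Da_{B+nP}$, and then classifies isomorphisms among the $\Da_B$ via its Proposition \ref{class} (any $U$ with $U\Da_BU^{-1}=\Da_D$ is triangular or anti-triangular, forcing $B\sim\pm D$) where you invoke Krull--Schmidt; the two are interchangeable. The rest (only the minus sign admits solutions because $\deg P>0$, so $\bar P$ has infinite order; a fold occurs exactly when $2\bar B\in\langle\bar P\rangle$) is the same bookkeeping the paper does.

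The genuine problem is the step you single out as ``the main obstacle'': the claim that a split vertex has no split neighbour off its apartment, i.e.\ that the lines are full subgraphs of $C_P$. That claim is false, and the theorem does not require it --- it asserts only that the split vertices are \emph{located} in a disjoint union of lines (a statement about the partition of the split vertex set and the adjacency of consecutive vertices on each apartment), not that the induced subgraph on split vertices is a disjoint union of lines. The paper's own computations exhibit many extra edges among split vertices: for $X=\mathbb{P}_1(\finitum)$ and $\deg P=2$ there is a loop at $\Da_0$; for $\deg P=4$ there is an edge joining $\Da_0$ to $\Da_{2Q}$, which lie on \emph{different} lines; the $\deg P=5$ example has further such edges. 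Indeed over $\mathbb{P}_1$ every rank-two bundle decomposes, so every neighbour of a split vertex is split, while the valency grows like $p^{\deg P}$; hence for $\deg P\geq 2$ most split neighbours are necessarily off the apartment, and the $\mathrm{Ext}^1$ computation you propose cannot succeed. Delete that check and your argument is complete and essentially identical to the paper's. Two minor points: the fold in the case $2\bar B=\bar P$ occurs at an edge (a loop in $C_P$), not at a vertex of the form $\mathcal M^{\oplus2}$ up to twist, which only happens when $2\bar B=0$; and, as your $2a=0$ criterion correctly reflects, the theorem's phrase ``elements of order $2$'' must be read as ``elements $a$ with $2a=0$,'' since the trivial class also yields a half-line.
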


In the case of a matrix algebra, the spinor class field of maximal $A$-orders $\Sigma_U$ is the maximal unramified exponent-2 abelian extension splitting $P$, and the Galois group $\mathrm{Gal}(\Sigma_U/K)$ is isomorphic to the maximal exponent-$2$ quotient of  $\mathrm{Pic(X)}/\langle\bar P\rangle$ (\S2). The image of the cusp $\Delta_B$ in \cite[\S II.2.3]{trees} is part of the line containing the order $\Da_{2B}$, so it is always in the trivial component of the C-graph. As follows from Theorem \textbf{S}, the rest of the graph is finite. In \S\ref{vale} we give a general formula for the valencies of v\'ertices in the S-graph, which allows us to compute valencies in the C-graph for all split vertices. When $X=\mathbb{P}_1(\finitum)$ is the projective line, the relation between both graphs can be made explicit as follows:
\begin{thm}\label{equal}
  If $X\cong\mathbb{P}_1(\finitum)$, $\alge\cong\matrici_2(K)$, and $P$ has odd degree, then $C_P$ is isomorphic to $S_P$ and connected. When  $P$ has even degree, there are two connected components in $C_P$ and every vertex of $C_P$ has exactly two pre-images in $S_P$.
\end{thm}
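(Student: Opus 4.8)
The plan is to read off the statement from two finite abelian $2$-groups attached to $X\cong\mathbb{P}_1(\finitum)$: the quotient $G/\Gamma$, which controls the fibres of the natural map $S_P\to C_P$, and $\mathrm{Pic}(A)/2\mathrm{Pic}(A)$ (with $A=A_P$), which classifies maximal $A$-orders up to conjugacy and hence controls the number of connected components of $C_P$ and of $S_P$. For $\mathbb{P}_1$ the degree map gives $\mathrm{Pic}(X)\cong\Z$, so $\mathrm{Pic}(X)[2]=0$, and the excision sequence gives $\mathrm{Pic}(A)\cong\mathrm{Pic}(X)/\langle\bar P\rangle\cong\Z/\deg(P)\Z$; thus $\mathrm{Pic}(A)[2]$ and $\mathrm{Pic}(A)/2\mathrm{Pic}(A)$ are trivial when $\deg(P)$ is odd and cyclic of order $2$ when $\deg(P)$ is even. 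Maximal orders of $\matrici_2(K)$ over $A$ are classified up to conjugacy by $\mathrm{Pic}(A)/2\mathrm{Pic}(A)$ (the order $\mathrm{End}_A(\mathcal E)$ depends on the rank‑two bundle $\mathcal E$ only modulo line‑bundle twist, and by Steinitz's theorem twist‑classes of rank‑two bundles are detected by $\det$ modulo squares), so there is one class when $\deg(P)$ is odd and two when $\deg(P)$ is even. Since each $C_P(\Da)=G\backslash T_P$ and each $S_P(\Da)=\Gamma\backslash T_P$ is a connected quotient of the tree $T_P$, and the C-graphs attached to non-conjugate maximal $A$-orders have disjoint vertex sets, the number of connected components of $C_P$ — and, built the same way, of $S_P$ — is exactly $|\mathrm{Pic}(A)/2\mathrm{Pic}(A)|$. (This is consistent with Theorem~\ref{th1}: the spinor class field of maximal $X$-orders in $\matrici_2(K)$ over $\mathbb{P}_1$ is the quadratic constant-field extension, whose Artin symbol at $P$ is the $\deg(P)$-th power of Frobenius and hence nontrivial exactly when $\deg(P)$ is odd, so each C-graph carries two spinor genera when $\deg(P)$ is odd and one when it is even.)

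The analytic heart is a normalizer identity valid for a maximal order $\oink$ of $\matrici_2(K)$ over either $A$ or $X$: the two-sided fractional ideals of $\oink$ are precisely the modules $\oink\otimes\mathfrak a$ with $\mathfrak a$ a fractional ideal of the base, and right multiplication identifies $\mathrm{Stab}_{\alge^*}(\oink)/(K^*\oink^*)$ with $\{[\mathfrak a]:\oink\otimes\mathfrak a\cong\oink\text{ as left }\oink\text{-modules}\}$; by Morita equivalence and the Steinitz ($\det$) invariant this set is the $2$-torsion of the Picard group of the base. Applying this over $X$ gives $\mathrm{Stab}_{\alge^*}(\Da)=K^*\Da^*$ for every maximal $X$-order $\Da$, because $\mathrm{Pic}(X)[2]=0$; applying it over $A$ gives $G/\Gamma\cong\mathrm{Pic}(A)[2]$, trivial when $\deg(P)$ is odd and of order $2$ when $\deg(P)$ is even.

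It then remains to feed this into the two cases. If $\deg(P)$ is odd then $G=\Gamma$, so $C_P(\Da)=S_P(\Da)$ as graphs, and with the single conjugacy class of maximal $A$-order both full graphs collapse to this one connected quotient: $C_P\cong S_P$ and is connected. If $\deg(P)$ is even then $G/\Gamma$ is cyclic of order $2$ and acts on the vertices of $S_P(\Da)=\Gamma\backslash T_P$ with quotient $C_P(\Da)$, and I would check that this action is free on vertices. A vertex fixed by a generator $g_0$ of $G/\Gamma$ is the $\Gamma$-orbit of a maximal $X$-order $\Da$ with $\gamma^{-1}g_0\in\mathrm{Stab}_{\alge^*}(\Da)$ for some $\gamma\in K^*D^*$; since $\Da=D\cap\Da_P\subseteq D$ forces $\Da^*\subseteq D^*$, and $\mathrm{Stab}_{\alge^*}(\Da)=K^*\Da^*$ by the previous paragraph, this yields $g_0\in K^*D^*\cdot K^*\Da^*=K^*D^*$, contradicting the choice of $g_0$. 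Hence $S_P(\Da)\to C_P(\Da)$ is two-to-one on vertices, and together with the component count this says $C_P$ has two components and each of its vertices has exactly two pre-images in $S_P$.

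The step I expect to be the main obstacle is the normalizer identity of the second paragraph: verifying that a left-$\oink$-module isomorphism $\oink\otimes\mathfrak a\cong\oink$ is realised by right multiplication by an element that also normalizes $\oink$, and, for even $\deg(P)$, carrying this out for the maximal $A$-order $D$ in its general shape $\mathrm{End}_A(\mathcal E)$ with $\mathcal E$ nonfree rather than for $\matrici_2(A)$. Everything else — the two Picard identities for $\mathbb{P}_1$, the classification of maximal orders by twist-classes of bundles, and the bookkeeping of components and fibres — is routine given the dictionary between vertices of $C_P$, $S_P$ and (isomorphism, resp.\ $\Gamma$-conjugacy) classes of orders set up earlier in the paper.
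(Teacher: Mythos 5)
Your proposal is correct and lands on the same two pivot facts as the paper---the number of conjugacy classes of maximal $A$-orders, and the order and freeness of the action of $G/\Gamma$ on $\Gamma\backslash T_P$---but it reaches them by a noticeably different route. The paper reads the component count off the spinor class field $\Sigma_U$ (trivial for odd $\deg P$, quadratic for even; note its text says ``connected'' in the even case where it plainly means two components), proves $G=\Gamma$, resp.\ $|G/\Gamma|\le 2$, by a valuation argument on $\det M$, gets the equality $|G/\Gamma|=2$ by pointing to a folded line of type (b) from \S4, and derives freeness from the observation that an element whose determinant has odd valuation at $P$ cannot stabilize any maximal order (``no vertex is ramified''). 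You instead package everything into the single normalizer identity $N(\oink)/K^*\oink^*\cong\mathrm{Pic}[2]$ of the base ring: applied over $A$ it gives $G/\Gamma\cong\mathrm{Pic}(A)[2]\cong(\Z/N\Z)[2]$ exactly, with no need to exhibit a folded line; applied over $X$ it gives $\mathrm{Stab}_{\alge^*}(\Da)=K^*\Da(X)^*\subseteq K^*D^*$, which kills fixed vertices at once. That identity is the standard statement that the normalizer of a maximal order in a split quaternion algebra over a Dedekind domain, modulo $K^*\oink^*$, is the $2$-torsion of the class group, and your sketch of it (two-sided ideals are $\mathfrak{a}\oink$; such an ideal is principal iff $\mathfrak{a}L\cong L$ iff $\mathfrak{a}^2$ is principal, via Morita and Steinitz) is sound; over $X=\mathbb{P}_1(\finitum)$ you only need that the determinant forces the twisting line bundle into $\mathrm{Pic}(X)[2]=0$, so there is no circularity with the Grothendieck--Birkhoff application made later in the paper. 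The trade-off: the paper's proof is shorter given the apparatus of \S\S2, 4, 5 already in place, while yours is more self-contained, gives $|G/\Gamma|$ on the nose, and makes the two-to-one behaviour on vertices transparent.
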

 In particular, when $\mathrm{deg}(P)=1$,  
then $C_P$  is as follows \cite[Ex.II.2.4.1]{trees}:
\[  \xygraph{
!{<0cm,0cm>;<.8cm,0cm>:<0cm,.8cm>::} 
!{(2,0)}*+{\bullet}="a" !{(2.3,0.3)}*+{{}^{\Da_0}}="a1" 
!{(4,0)}*+{\bullet}="b"  !{(4.3,0.3)}*+{{}^{\Da_P}}="b1" 
!{(6,0)}*+{\bullet}="c"  !{(6.3,0.3)}*+{{}^{\Da_{2P}}}="c1"
!{(8,0)}*+{}="d" 
!{(1,1.17)}*+{}="w" 
 "a"-"b" "b"-"c"
"c"-@{.}"d"  } 
\]

The multiplicity $M_P(\Da,\Da')$ of edges joining two particular vertices $\Da$ and $\Da'$ can be explicitely computed, at least for most split vertices, in terms of $N_P(\Da,\Da')$, the number of -neighbors of $\Da$ in $T_P$ that correspond to maximal orders isomorphic to $\Da'$. This is the case for all vertices when $X=\mathbb{P}_1(\finitum)$ and $\alge=\matrici_2(K)$ (cf. \S\ref{multi}). In \S7 we prove the following reciprocity law:

 \begin{thm}\label{th3}
For any pair of maximal orders $(\Da,\Da'')$ and any pair $(P,Q)$ of prime divisors in $X$, we have
$$\sum_{\Da'}N_P(\Da,\Da')N_Q(\Da',\Da'')=\sum_{\Da'}N_Q(\Da,\Da')N_P(\Da',\Da''),$$ where the sum extends
over all isomorphism classes of maximal orders in $\alge$.
\end{thm}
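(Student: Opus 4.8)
The plan is to read both sides of the identity as matrix coefficients of compositions of ``neighbour operators'' attached to $P$ and to $Q$, and to deduce the identity from the fact that these operators commute — the essential point being that replacing an $X$-order by a $P$-neighbour in $T_P$ changes only its completion at $P$, while passing to a $Q$-neighbour changes only its completion at $Q$, and $P\ne Q$.

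First I would set up the operators. Let $\mathbb{O}$ be the set of all maximal $X$-orders in $\alge$ (not up to isomorphism), with $\alge^*$ acting by conjugation and orbit set the isomorphism classes. For a prime $R$ at which $\alge$ splits, call $\Da'\in\mathbb{O}$ an $R$-neighbour of $\Da$ if $\Da'_{R'}=\Da_{R'}$ for every prime $R'\ne R$ and $\Da'_R$ is adjacent to $\Da_R$ as a vertex of the local tree $T_R$; each $\Da$ has exactly $q_R+1$ of these. (If $\alge$ ramifies at $R$ the local tree is a single vertex, there are no neighbours, $N_R\equiv 0$, and both sides of the asserted identity vanish; so assume $\alge$ splits at both $P$ and $Q$.) Let $\tilde N_R$ be the endomorphism of the free abelian group on $\mathbb{O}$ sending each $\Da$ to the sum of its $R$-neighbours. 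Since the tree structure at $R$ is $\alge^*$-equivariant, $\tilde N_R$ commutes with the $\alge^*$-action and therefore descends to an endomorphism $N_R$ of the free abelian group $V$ on isomorphism classes; unravelling the definition, $N_R([\Da])$ is the image of $\sum_{\Da'\sim_R\Da}\Da'$, which regroups as $\sum_{[\Da']}N_R(\Da,\Da')[\Da']$, so the matrix coefficient of $N_R$ from $[\Da]$ to $[\Da']$ is exactly the integer $N_R(\Da,\Da')$ of the statement — a finite sum, since $N_R(\Da,\anything)$ is supported on at most $q_R+1$ classes. Consequently the left-hand side of the identity is the $([\Da],[\Da''])$-entry of one of the composites $N_PN_Q$, $N_QN_P$ and the right-hand side is that of the other (which one depending on a harmless convention); hence the theorem is equivalent to $N_PN_Q=N_QN_P$, and for this it suffices to prove $\tilde N_P\tilde N_Q=\tilde N_Q\tilde N_P$ on the free abelian group over $\mathbb{O}$.

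To prove the latter, fix $\Da,\Da'''\in\mathbb{O}$ and compute the $(\Da,\Da''')$-entry of $\tilde N_P\tilde N_Q$: it is the number of $\Da'\in\mathbb{O}$ such that $\Da'$ is a $P$-neighbour of $\Da$ and $\Da'''$ is a $Q$-neighbour of $\Da'$. Since $P\ne Q$, the first condition forces $\Da'_{R}=\Da_{R}$ for every $R\ne P$ (in particular $\Da'_Q=\Da_Q$), and the second forces $\Da'''_R=\Da'_R$ for every $R\ne Q$ (in particular $\Da'''_P=\Da'_P$). Hence such a $\Da'$ can exist only when $\Da'''_R=\Da_R$ for all $R\notin\{P,Q\}$, with $\Da'''_P$ adjacent to $\Da_P$ in $T_P$ and $\Da'''_Q$ adjacent to $\Da_Q$ in $T_Q$; and in that case $\Da'$ is uniquely recovered by $\Da'_P=\Da'''_P$ and $\Da'_R=\Da_R$ for $R\ne P$. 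Thus the entry equals $1$ or $0$ according to a condition on the pair $(\Da,\Da''')$ that is visibly symmetric in $P$ and $Q$, and the same computation with $P$ and $Q$ interchanged gives the $(\Da,\Da''')$-entry of $\tilde N_Q\tilde N_P$; the two coincide, so $\tilde N_P\tilde N_Q=\tilde N_Q\tilde N_P$ and the theorem follows.

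I do not expect a real obstacle here: the mathematical content is just that the two elementary moves act on disjoint local coordinates of an $X$-order. The only care needed is bookkeeping — verifying that $\tilde N_R$ descends to the operator whose coefficients are the $N_R(\Da,\Da')$ of the statement (so that the reinterpretation of the two sides as entries of $N_PN_Q$ and $N_QN_P$ is legitimate), and recording the degenerate behaviour at a ramified prime so that the identity holds verbatim for every pair $(P,Q)$.
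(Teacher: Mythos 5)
Your proof is correct, but it takes a genuinely different route from the paper's. The paper deduces the identity from its Proposition on optimal embeddings: for $\Ha$ an order of maximal rank with $\Ha_P$ maximal, $I(\Ha^{[P]}|\Da)=\sum_{\Da'}N_P(\Da,\Da')I(\Ha|\Da')$ (resting on the cited result that $\Ha^{[tP]}\subseteq\Da$ iff some $\Da'\supseteq\Ha$ lies within distance $t$ of $\Da$ in $T_P$). Taking $\Ha$ maximal, so that $I(\Ha|\Da')$ is the indicator of $\Da'\cong\Ha$, and applying the formula twice identifies both sides of the asserted identity with $I(\Ha^{[P+Q]}|\Da)$, which is symmetric because $P+Q=Q+P$ as divisors. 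You instead lift the two sums to the set of all maximal orders (not up to isomorphism), read them as entries of composites of adjacency operators $\tilde N_P$, $\tilde N_Q$, and check directly that these commute because a $P$-neighbour move and a $Q$-neighbour move alter disjoint local completions; the $\alge^*$-equivariance then pushes the commutation down to isomorphism classes, where the matrix coefficients are exactly the $N_R(\Da,\Da')$ of the statement. Both arguments hinge on the same fact --- independence of the local modifications at $P$ and $Q$ --- but yours is self-contained and purely combinatorial, and it makes explicit two points the paper leaves implicit (finiteness of the sums, and the degenerate $0=0$ case at a ramified prime), whereas the paper's route has the advantage of producing the intermediate interpretation of the common value as the embedding number $I(\Ha^{[P+Q]}|\Da)$, in line with the representation-of-orders theme of its \S7. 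The only cosmetic omission on your side is the trivial case $P=Q$, where the identity holds verbatim with nothing to prove.
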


Note that all sums in the theorem are actually finite. In particular, when $X=\mathbb{P}_1(\finitum)$, then $C_P$ can be completely determined by the infinite matrix  $N_P=\Big(N_P(\Da_{iQ},\Da_{jQ})\Big)_{i,j\in\mathbb{N}}$, where $\mathrm{deg}(Q)=1$. When $Q=P$, the matrix is (cf. \S6):
$$N_1:=N_Q=\left(\begin{array}{ccccc}
0&p&0&0&\cdots\\ p+1&0&p&0&\cdots\\ 0&1&0&p&\cdots\\ 0&0&1&0&\cdots\\
\vdots& \vdots& \vdots& \vdots& \ddots\end{array}\right).$$
In this context, all matrices $N_P$ are described by next result:
 \begin{thm}\label{th4}
For any place $P\in\mathbb{P}_1(\finitum)$, the matrix $N_P=N_{\mathrm{deg}(P)}$ depends only onthe degree of
 $P$, and can be computed by the recurrence relation 
\begin{equation}\label{recurr}
N_d=N_1^d-\sum_{i=1}^{[d/2]}{d\choose i}p^iN_{d-2i}.\end{equation}
\end{thm}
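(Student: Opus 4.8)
The plan is to realise all the operators $N_{P}$ on a single module and reduce everything to a scalar identity. Regard each $N_{P}$ as acting on the free module $M=\bigoplus_{n\ge 0}\Z\,\Da_{nQ}$ on isomorphism classes of maximal $X$--orders. By Theorem \ref{th3} the $N_{P}$ commute pairwise, so in particular every $N_{P}$ commutes with $N_{1}$. From the explicit shape of $N_{1}$ established in \S6 the end vertex $\Da_{0}$ is a cyclic vector for $N_{1}$ over $\Q$: applying $N_{1}$ repeatedly reaches one further vertex each time with a nonzero (in fact power-of-$p$) coefficient, so $N_{1}^{k}\Da_{0}$ spans the same $\Q$--space as $\Da_{0},\dots,\Da_{k}$. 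Hence the commutant of $N_{1}$ in $\mathrm{End}_{\Q}(M\otimes\Q)$ equals $\Q[N_{1}]$, and $N_{P}=q_{P}(N_{1})$ for a unique polynomial $q_{P}$. Because a modification at a place of degree $d$ changes the type of a maximal order by at most $d$, the matrix $N_{P}$ is banded of width $\deg P$, so $\deg q_{P}\le\deg P$ and $q_{P}$ is forced to depend only on $\deg P$; this already gives the first assertion $N_{P}=N_{\deg P}$. Write $q_{d}$ for this polynomial, with $q_{0}=1$ and $q_{1}=X$.

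Next I would identify $q_{d}$ via the joint spectral decomposition. Solving $N_{1}\phi=\lambda\phi$: off the boundary the components satisfy $p\phi_{n+1}-\lambda\phi_{n}+\phi_{n-1}=0$, a second-order recurrence whose characteristic roots have product $1/p$; writing them as $\alpha/p$ and $1/\alpha$ one gets $\lambda=\alpha+p/\alpha$. For each such $\lambda$ the boundary relation at $\Da_{0}$ picks out one generalized eigenfunction $\phi_{\alpha}$, automatically a joint eigenfunction of all $N_{P}$, and these span $M\otimes\compleji$ as $\lambda$ runs over an infinite set. The crucial input is then
\begin{equation}\label{eigpt}
N_{P}\,\phi_{\alpha}=\bigl(\alpha^{\deg P}+\beta^{\deg P}\bigr)\phi_{\alpha},\qquad\beta:=p/\alpha,
\end{equation}
which is the function-field form of the assertion that over a curve of genus $0$ every everywhere-unramified automorphic form for $\mathrm{PGL}_{2}$ is Eisenstein, so that the local Satake parameter of $\phi_{\alpha}$ at $P$ is $\{\alpha^{\deg P},\beta^{\deg P}\}$ with $\alpha\beta=p$; concretely one obtains \eqref{eigpt} by rewriting the count of $P$--neighbours through the local Satake transform at $P$ together with the lattice/bundle description recalled in \S\ref{intro}. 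Granting \eqref{eigpt} and $N_{1}\phi_{\alpha}=(\alpha+\beta)\phi_{\alpha}$, we get $q_{d}(\alpha+\beta)=\alpha^{d}+\beta^{d}$ on the (infinite) spectrum of $N_{1}$, hence identically: $q_{d}$ is the unique polynomial with $q_{0}=1$ and $q_{d}(\alpha+p/\alpha)=\alpha^{d}+(p/\alpha)^{d}$.

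The last step is the purely elementary verification that this $q_{d}$ satisfies \eqref{recurr}. Substituting $X=\alpha+\beta$ with $\alpha\beta=p$, the right-hand side of \eqref{recurr} becomes $\sum_{i=0}^{[d/2]}\binom{d}{i}(\alpha\beta)^{i}q_{d-2i}(\alpha+\beta)$; here the terms with $d-2i\ge 1$ contribute $\binom{d}{i}(\alpha\beta)^{i}(\alpha^{d-2i}+\beta^{d-2i})$, while for even $d$ the term $i=d/2$ contributes $\binom{d}{d/2}(\alpha\beta)^{d/2}$ (since $q_{0}=1$), and the whole sum telescopes by the binomial theorem to $(\alpha+\beta)^{d}$. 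As this identity of polynomials in $X$ holds on the infinite spectrum of $N_{1}$ it holds identically, and substituting $X=N_{1}$, $N_{d-2i}=q_{d-2i}(N_{1})$ yields \eqref{recurr}.

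The hard part is \eqref{eigpt} for places of arbitrary degree -- equivalently, the absence of a cuspidal part in $M\otimes\compleji$ and the fact that the Satake parameter at $P$ is the $\deg P$--th power of the global one. This is precisely where rationality of $X$ is used (triviality of $\mathrm{Pic}(X)$ modulo torsion from constants, no everywhere-unramified $\mathrm{PGL}_{2}$ cusp forms over $K$), hand in hand with the reciprocity law of Theorem \ref{th3}; the remaining manipulations are formal. One could also establish $N_{P}=N_{\deg P}$ independently at the outset, by noting that $N_{P}(\Da_{m},\Da_{n})$ counts the $\finitum$--points of a locally closed subscheme of $\mathbb{P}_{1}(\kappa_{P})$ whose cardinality is a fixed polynomial in $|\kappa_{P}|=p^{\deg P}$.
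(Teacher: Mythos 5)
Your overall architecture parallels the paper's more closely than you might expect: both proofs rest on (i) a rigidity statement forcing any operator that commutes with $N_1$ and agrees generically with a polynomial in $N_1$ to equal that polynomial, and (ii) a computation identifying which polynomial. Your version of (i) --- $\Da_0$ is a cyclic vector for the tridiagonal operator $N_1$ on $\bigoplus_{n\ge0}\Z\,\Da_{nQ}$, hence the commutant is $\Q[N_1]$, and the band width of $N_P$ bounds $\deg q_P$ --- is correct, and is essentially the paper's lemma that the only finitely supported matrix commuting with $N_1$ is zero, in different clothing. The closing polynomial identity $q_d(\alpha+\beta)=\alpha^d+\beta^d$ with $\alpha\beta=p$, $q_0=1$, and its equivalence with (\ref{recurr}) via the binomial theorem, is also fine.

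The gap is in (ii): your key claim $N_P\,\phi_\alpha=\bigl(\alpha^{\deg P}+\beta^{\deg P}\bigr)\phi_\alpha$ is the entire content of the theorem and is asserted rather than proved. It is exactly the statement $N_P=q_{\deg P}(N_1)$ in spectral form; deriving it from the absence of everywhere-unramified cusp forms for $\mathrm{PGL}_2$ over $\finitum(t)$ together with the compatibility of local Satake parameters imports machinery the paper never develops, and the step you describe as ``rewriting the count of $P$-neighbours through the local Satake transform'' is precisely the computation that must actually be done. It is the analogue of the paper's direct verification that both sides of (\ref{recurr}) agree in all sufficiently high rows and columns, which it carries out concretely via the orbit counts of \S\ref{vale}--\S\ref{multi}, and which you have not carried out in any form. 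Two further, smaller defects: the assertion that ``$q_P$ is forced to depend only on $\deg P$'' does not follow from the degree bound $\deg q_P\le\deg P$ alone, so the first claim of the theorem is unsupported at the point where you state it (your closing remark about point counts of subschemes is likewise only a plausibility argument); and the generalized eigenfunctions $\phi_\alpha$ are not finitely supported, hence do not lie in $M\otimes\compleji$ and cannot span it --- one must work in the dual, or argue only that $q_P$ is pinned down by its values at infinitely many spectral points. As written, the proposal reduces the theorem to an unproved input of comparable depth.
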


\section{Orders and spinor genera}\label{ndos}

Recall that an $X$-order in a $K$-vector space $V$ is a locally free subsheaf of the constant sheaf $V$ \cite{brzezinski87}.
For any sheaf of groups $\Lambda$ on $X$ we let $\Lambda(U)$ denote the group of $U$-sections. In particular,
 $\Lambda(X)$ is the group of global sections. In all that follows, we assume that $\finitum$
is the whole field of constants in $K$, in the sense that
$\oink_X(X)=\finitum$, as otherwise $\finitum$ can be replaced
with a larger field. Let $\alge$ be a central simple $K$-algebra.
In this section we review the basic facts about spinor genera and spinor class fields of orders.
See \cite{abelianos} for details.

Let $|X|$ be the set of closed points in $X$.
Let $\ad=\ad_X$ be the adele ring of $X$, i.e., the subring of $\prod_{P\in|X|}K_P$ of elements that are integral 
at almost all places. Let $\alge_\ad=\alge\otimes_K\ad$ be the adelization of $\alge$.
Both  $\ad$ and $\alge_\ad$ are given the adelic topology \cite[\S IV.1]{weil}. More generally, for any finite
dimensional $K$-vector space $V$, we can define the adelization $V_\ad=V\otimes_K\ad$ endowed with the product topology. For any $\oink_X$-lattice $\Lambda$, the adelization $\Lambda_\ad=\prod_{P\in|X|}\Lambda_P$, is an open and compact subgroup of
$V_\ad$. In particular, the ring of integral ideles $\oink_\ad=(\oink_X)_\ad$ is open and closed in
$\ad$. Furthermore, every open and compact $\oink_\ad$-sub-modules of $V_\ad$ is the adelization of a lattice. For any lattice
$\Lambda$ and any adelic element $a\in\big(\mathrm{End}_K(V)\big)_\ad\cong\mathrm{End}_\ad(V_\ad)$, the lattice $L=a\Lambda$ is the lattice defined by $L_\ad=a\Lambda_\ad$. 

Since any two maximal orders are locally conjugate at all places, if we fix a maximal order $\Da$, any other maximal $X$-order on $\alge$ has the form $\Da'=a\Da a^{-1}$ for some adelic element $a\in\alge_\ad^*$. In a more general theory it is said that
two maximal orders are always in the same genus \cite{Eichler2}.   Two maximal orders $\Da$ and $\Da'$ are in the same spinor genus if $a$ can be chosen of the form $a=bc$ where $b\in\alge$ and $N(c)=1_\ad$, where $N:\alge_\ad^*\rightarrow \ad^*=:J_X$ is the reduced norm on adeles. The spinor class field is defined as the class field corresponding to the set $K^*H(\Da)\subseteq J_X$, where
$$H(\Da)=\{N(a)|a\in\alge_\ad^*,\  a\Da a^{-1}=\Da\}.$$
Let $t\mapsto [t,\Sigma/K]$ denote the Artin map on ideles.
The distance between the maximal orders is the element
$\rho(\Da,\Da')\in\mathrm{Gal}(\Sigma/K)$ defined by $\rho(\Da,\Da')=[N(a),\Sigma/K]$, for any adelic element $a\in\alge_\ad^*$
satisfying $\Da'=a\Da a^{-1}$. Note that this implies that $\rho(\Da,\Da'')=\rho(\Da,\Da')\rho(\Da',\Da'')$ for any triple $(\Da,\Da',\Da'')$ of maximal orders. The spinor class field can be defined also for any affine subset of $X$. In fact,
the spinor class field $\Sigma_U$ corresponding to an affine set $U\subseteq X$ is the largest subfield of $\Sigma$ completely
splitting every place in $S=X\backslash U$.

One important property of spinor genera is that they coincide with conjugacy classes whenever strong approximation holds.
In the context of $X$-orders, this implies that two maximal orders are in the same
spinor genus if and only if they are isomorphic (as sheaves) in
every affine subset $U$ whose complement $S$ has a place
splitting $\alge$. More generally, for a given affine subset $U$ satisfying this condition,
two $S$-orders $\Da(U)$ and $\Da'(U)$ are isomorphic if and only if the distance
$\rho(\Da,\Da')$ is in the group $\left\langle|[P,\Sigma/K]|\Big|P\in S\right\rangle$,
 where $\Sigma$ is the spinor class field of maximal $X$-orders for $\alge$, and $I\mapsto |[I,\Sigma/K]|$ 
is the artin map on ideals  (see \cite[\S2]{abelianos} or \cite[\S2]{ab2}). In all that follows,
we assume $S=\{P\}$ for a fixed place at infinity $P$ splitting $\alge$.

Let $\Ha$ be a suborder of a maximal order $\Da$, and let
$$H(\Da|\Ha)=\{n(a)|a\Ha_\ad a^{-1}\subseteq\Da_\ad,\
a\in\alge^*_\ad\}\subseteq J_X.$$ 
When any of the following equivalent
conditions holds:
\begin{enumerate} \item  the set $K^*H(\Da|\Ha)\subseteq J_K$ is a group,
\item the set
$\Phi=\{\rho(\Da,\Da')|\Ha\subseteq\Da'\}\subseteq\mathrm{Gal}(\Sigma/K)$
is a group,\end{enumerate} then the class field $F(\Ha)$  corresponding to $K^*H(\Da|\Ha)$, or equivalently, the
fixed field $\Sigma^\Phi$, is called the representation field for $\Ha$.
The representation field is not always defined for central simple algebras of arbitrary dimension,
but this is indeed the case for quaternion algebras \cite{Eichler}. When $\alge$ is a quaternion algebra and $\Ha$
is the maximal order in a maximal subfield $L$, then $F(\Ha)=L\cap\Sigma$ \cite[\S5, Cor.2]{abelianos}.

\begin{ex}\label{ex21}
When $\alge\cong\matrici_2(K)$, then $H(\Da)=J_X\cap\prod_{P\in|X|}\oink_P^*K_P^{*2}$, so that $\Sigma$ is the largest unramified exponent-2 abelian extension of $K$. When $X=\mathbb{P}_1(\finitum)$, so that $K=\finitum(t)$,
then $\Sigma=\mathbb{L}(t)$ for the unique quadratic extension $\mathbb{L}$ of $\finitum$.
\end{ex}

\paragraph{Proof of Theorem \ref{th1}:}
Let $U=X\backslash\{P\}$ be a maximal affine subset. The spinor class field $\Sigma_U$ of maximal
$\{P\}$-orders is the maximal subfield of $\Sigma$ splitting
completely at $P$. In particular,
$\Sigma_U=\Sigma$ if and only if $ P $ splits completely in
$\Sigma/K$. Otherwise, $\Sigma_U$ is a subextension with
$[\Sigma:\Sigma_U]=2$. If $ P $ in unramified for $\alge$, any two
maximal order $\Da$ and $\Da'$ are isomorphic on $U$ if and only
if their distance $\rho(\Da,\Da')$ is trivial on $\Sigma_U$. If this is the case, replacing $\Da'$ by a
(global) conjugate it can be assumed that $\Da(U)=\Da'(U)=D$. The
set of maximal orders satisfying the last relation is in
correspondence with the vertices of $T_P$. Two such orders are conjugate if and only if $\Da'=g\Da g^{-1}$
for some $g\in G$.

Let $e_P$ be an idele that is $1$ outside of $P$ and a
uniformizing parameter $\pi_P$ at $P$. Note that if $\Da$ and $\Da'$ are neighbors in $T_P$,
their completions $\Da_P$ and $\Da_P'$ have,  in some basis, the form
$$\Da_P=\sbmattrix {\oink_P}{\oink_P}{\oink_P}{\oink_P},\quad
\Da_P=\sbmattrix {\oink_P}{\pi_P^{-1}\oink_P}{\pi_P\oink_P}{\oink_P}=\sbmattrix100{\pi_P}\Da_P\sbmattrix100{\pi_P}^{-1}.$$ We conclude
that $\rho(\Da,\Da')=[e_P,\Sigma/K]=|[P,\Sigma/K]|$. It follows that the graph is bipartite whenever $|[P,\Sigma/K]|\neq\mathrm{id}_\Sigma$.
\qed

\section{Orders and vector bundles}\label{bundles}

In this section, notations are as in \S\ref{ndos}, except that we assume $\alge=\matrici_n(K)$.
In this case, any
maximal $X$-order on $\alge$ has the form $\Da=b\Da_0b^{-1}$ where
$b\in\alge_{\ad}$ is a matrix with adelic coefficients and
$\Da_0\cong\matrici_n(\oink_X)$. Note that the adelization is
$\Da_{0\ad}\cong\matrici_n(\oink_\ad)$, where $\oink_\ad\cong\prod_{P\in|X|}\oink_P$
is the ring of integral adeles (\S2).  In particular,  $\Da_{0\ad}^*$ is the 
group of adelic matrices $c$ satisfying $c\oink_X^n=\oink_X^n$. It
follows that $\Da_\ad^*$ is the group of all adelic matrices $c$
satisfying $c\Lambda=\Lambda$, where
$\Lambda=b\Lambda_0=b\oink_X^n$. Since the stabilizer of any
order $\Da_\wp$ in $\matrici_2(K_P)$  is $\Da_P^*K_P^*$, it follows that two
$X$-lattices $\Lambda_1$ and $\Lambda_2$ corresponds to the same
maximal order, if and only if $\Lambda_1=d\Lambda_2$ for some
 $d\in J_X$. Let $\mathrm{div}(d)$ be the divisor generated by $d$, i.e.,
 $\mathfrak{L}^{-\mathrm{div}(d)}=d\oink_X$. Note that every divisor is generated by an idele. Next result follows:

\begin{prop}
There is a correspondence between conjugacy classes of maximal
$X$-orders in $\matrici_n(K)$ and isomorphism classes of vector
bundles over $X$ up to multiplication by invertible bundles.
\end{prop}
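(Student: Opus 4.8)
The plan is to combine the lattice description set up above with the classical dictionary between lattices in $K^n$ and vector bundles. By the discussion preceding the statement, the assignment sending a maximal order $\Da=b\Da_0b^{-1}$, with $b\in\alge_\ad^*$, to the $X$-lattice $\Lambda=b\oink_X^n$ is a well-defined bijection between the set of maximal $X$-orders in $\matrici_n(K)$ and the set of $X$-lattices in $K^n$ modulo the equivalence $\Lambda\sim d\Lambda$ for $d\in J_X$. The first step is to check that this bijection intertwines the conjugation action of $\alge^*=\mathrm{GL}_n(K)$ on orders with the action $g\cdot\Lambda=g\Lambda$ on lattices: conjugating $b\Da_0b^{-1}$ by $g$ yields $(gb)\Da_0(gb)^{-1}$, whose associated lattice is $(gb)\oink_X^n=g\Lambda$. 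Consequently, conjugacy classes of maximal $X$-orders are in bijection with $\mathrm{GL}_n(K)$-orbits of $X$-lattices taken modulo scalar ideles.

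Next I would invoke the standard correspondence between lattices and bundles. An $X$-lattice $\Lambda\subseteq K^n$ is the same datum as a locally free rank-$n$ sheaf $\mathcal{E}_\Lambda$ (with stalks $\Lambda_P$) equipped with a trivialization of its generic fibre, and two lattices define isomorphic sheaves precisely when they lie in one $\mathrm{GL}_n(K)$-orbit: any sheaf isomorphism $\mathcal{E}_{\Lambda_1}\xrightarrow{\sim}\mathcal{E}_{\Lambda_2}$ induces on generic fibres an element $g\in\mathrm{GL}_n(K)$ with $\Lambda_2=g\Lambda_1$, and conversely multiplication by such a $g$ is an isomorphism. Surjectivity — that every rank-$n$ vector bundle arises from a lattice — follows by choosing a trivialization of its generic fibre and using that a locally free sheaf is torsion-free, hence embeds into that fibre. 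Thus $\mathrm{GL}_n(K)$-orbits of $X$-lattices in $K^n$ are exactly isomorphism classes of rank-$n$ vector bundles on $X$.

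Finally I would translate the remaining scalar-idele identification into bundle language. For $d\in J_X$ one has $d\oink_X=\mathfrak{L}^{-\mathrm{div}(d)}$, an invertible subsheaf of the constant sheaf $K$, and $d\Lambda=(d\oink_X)\otimes_{\oink_X}\Lambda$; hence passing from $\Lambda$ to $d\Lambda$ replaces $\mathcal{E}_\Lambda$ by $\mathcal{E}_\Lambda\otimes\mathfrak{L}^{-\mathrm{div}(d)}$. Since every invertible sheaf on $X$ has the form $\mathfrak{L}^C$ for some divisor $C$, and every divisor equals $\mathrm{div}(d)$ for a suitable idele $d$ (as recalled above), the scalar-idele orbits on isomorphism classes of rank-$n$ bundles are precisely the classes modulo tensoring by invertible sheaves. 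Combining the three steps gives the stated correspondence. I expect the only delicate point to be the bookkeeping in the lattice/bundle dictionary — verifying that the two constructions are mutually inverse and $\mathrm{GL}_n(K)$-equivariant, rather than merely in bijection — while the rest is formal manipulation with the tools already introduced.
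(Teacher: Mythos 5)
Your argument follows the same route the paper takes: the paper derives the proposition directly from the preceding discussion (maximal orders correspond to lattices $b\oink_X^n$ modulo scalar ideles, since the local normalizer of an order is $\Da_P^*K_P^*$), and then tacitly invokes the standard lattice/bundle dictionary, which you spell out together with the $\mathrm{GL}_n(K)$-equivariance and the identification of scalar-idele twists with tensoring by invertible sheaves. Your write-up is correct and simply makes explicit the steps the paper compresses into ``Next result follows.''
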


Let $\Da_E=\mathcal{E}\!\textnormal{\footnotesize{$nd$}}_{\oink_X}(E)$ be
the maximal order corresponding to the vector bundle $E$. A finite algebra $\mathbb{B}$ acts globally as a ring of
endomorphisms of a vector bundle $E$ if and only if $\mathbb{B}$ embeds
into the ring of global sections $\Da_E(X)$. Note that the maximal
order $\Da_B$ defined in the introduction is the order $\Da_{E_B}$
corresponding to the bundle $E_B=\oink_X\oplus\mathfrak{L}^B$.
More generally, the maximal order corresponding to the bundle
$\mathfrak{L}^A\oplus\mathfrak{L}^B=\mathfrak{L}^A(\oink_X\oplus\mathfrak{L}^{B-A})$
is $\Da_{B-A}$. Note that a maximal $X$-order $\Da=\Da_E$ is split
if and only if any of the following equivalent conditions is
satisfied:
\begin{enumerate}
\item The algebra $\finitum^2=\finitum\times\finitum$ acts
globally on the vector bundle $E$. \item The algebra $\finitum^2$
embeds into the ring of global sections $\Da(X)$.  \item  The
commutative order $\Ha=\oink_X\times\oink_X$ embeds into
$\Da$.
\end{enumerate}
It follows from \cite[Cor.5.6]{abelianos} that every spinor
genera of maximal orders contain split orders. In fact, if $B=\mathrm{div}(b)$ is the
divisor generated by the idele $b$, then
$\Da_B=c\Da_0c^{-1}$ where $c=\sbmattrix 100b$. In particular, in
the notations of \cite[\S2]{abelianos}, the corresponding distance
element is $\rho(\Da_0,\Da_B)=[b,\Sigma/K]\in\mathrm{Gal}(\Sigma/K)$,
and therefore $\rho(\Da_A,\Da_B)=[a^{-1}b,\Sigma/K]\in\mathrm{Gal}(\Sigma/K)$.
By Example \ref{ex21} the spinor genera $\mathrm{Spin}(\Da_A)$ and $\mathrm{Spin}(\Da_B)$ coincide if and only if
$A-B\in2\mathrm{Pic}(X)$.

In general, if $\mathbb{B}\subseteq\matrici_2(K)$ is a finite $\finitum$-algebra, the dimension
 $\mathrm{dim}_{\finitum}\mathbb{B}$ can be arbitrarily large. However, we have next result:

\begin{prop} Assume $\mathbb{B}=\mathbb{B}'\oplus\mathbb{R}$ is a finite $\finitum$-algebra contained in
$\matrici_n(K)$, where $\mathbb{R}$ is the radical of $\mathbb{B}$. Then
$\mathrm{dim}_{\finitum}\mathbb{B}'=\mathrm{dim}_{K}(K\mathbb{B}')$, and the sum
$K\mathbb{B}'+K\mathbb{R}$ is direct.\end{prop}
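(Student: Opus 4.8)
The natural object to study is the multiplication map $\psi\colon\mathbb{B}'\otimes_{\finitum}K\to\matrici_n(K)$, $b\otimes\lambda\mapsto\lambda b$, a homomorphism of $K$-algebras whose image is exactly $K\mathbb{B}'$. Both assertions will follow once we know that $\psi$ is injective and that its source is semisimple. So I would first record the structure of $\mathbb{B}'$: being the complement of the radical it is a semisimple $\finitum$-algebra, and since $\finitum$ is finite, Wedderburn--Artin together with Wedderburn's little theorem on finite division rings gives $\mathbb{B}'\cong\prod_i\matrici_{n_i}(\finitum_i)$ with each $\finitum_i$ a finite field extension of $\finitum$.

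The key step is to check that $\mathbb{B}'\otimes_{\finitum}K$ is again semisimple, with simple components indexed by the same set $\{i\}$. This is where the standing hypothesis $\oink_X(X)=\finitum$ — i.e. that $\finitum$ is algebraically closed in $K$ — enters: each $\finitum_i/\finitum$ is (finite, hence) Galois and $\finitum_i\cap K=\finitum$, so $\finitum_i$ and $K$ are linearly disjoint over $\finitum$, and therefore $\finitum_i\otimes_{\finitum}K$ is the compositum $\finitum_iK$, a field. (Equivalently: an irreducible polynomial over $\finitum$ stays irreducible over $K$.) Hence $\mathbb{B}'\otimes_{\finitum}K\cong\prod_i\matrici_{n_i}(\finitum_iK)$ is a finite product of matrix algebras over fields.

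For injectivity of $\psi$ I would argue that $\ker\psi$, being a two-sided ideal of $\mathbb{B}'\otimes_{\finitum}K$, equals $\prod_{i\in S}\matrici_{n_i}(\finitum_iK)$ for some subset $S$. If $S\neq\emptyset$, pick $i_0\in S$ and let $b_{i_0}\in\mathbb{B}'$ be the corresponding primitive central idempotent; then $b_{i_0}\otimes1\in\ker\psi$, i.e. $\psi(b_{i_0}\otimes1)=b_{i_0}=0$ in $\matrici_n(K)$, contradicting that $b_{i_0}$ is a nonzero idempotent of $\mathbb{B}'$. So $\psi$ is injective, which yields $\mathrm{dim}_{\finitum}\mathbb{B}'=\mathrm{dim}_K(K\mathbb{B}')$ and identifies $K\mathbb{B}'$ with the semisimple $K$-algebra $\mathbb{B}'\otimes_{\finitum}K$.

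For the directness of $K\mathbb{B}'+K\mathbb{R}$, note that $K\mathbb{R}$ is a two-sided ideal of $K\mathbb{B}=K\mathbb{B}'+K\mathbb{R}$ (because $\mathbb{R}$ is an ideal of $\mathbb{B}$ and scalars are central) and is nilpotent, since $(K\mathbb{R})^m=K\mathbb{R}^m=0$ for the nilpotence index $m$ of $\mathbb{R}$. Hence $K\mathbb{B}'\cap K\mathbb{R}$ is a two-sided ideal of the subalgebra $K\mathbb{B}'$ whose $m$-th power vanishes, and since $K\mathbb{B}'$ is semisimple this forces $K\mathbb{B}'\cap K\mathbb{R}=0$, so the sum is direct. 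The main obstacle is the semisimplicity step of the second paragraph: without the constant-field hypothesis an irreducible polynomial over $\finitum$ could factor over $K$, the center of $\mathbb{B}'\otimes_{\finitum}K$ could acquire new idempotents, and then $\ker\psi$ would no longer have to be cut out by an idempotent coming from $\mathbb{B}'$, so the contradiction in the injectivity step would disappear — and indeed the dimension statement genuinely fails otherwise, e.g. if a larger finite field $\finitum'\subseteq K$ is present, then $\mathbb{B}'=\finitum'\hookrightarrow K=\matrici_1(K)$ has $\mathrm{dim}_K(K\mathbb{B}')=1<[\finitum':\finitum]=\mathrm{dim}_{\finitum}\mathbb{B}'$.
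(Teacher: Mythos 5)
Your proposal is correct and follows essentially the same route as the paper: both arguments reduce to the Wedderburn decomposition of $\mathbb{B}'$ over the finite field, use the constant-field hypothesis $\oink_X(X)=\finitum$ to see that $\finitum_i\otimes_{\finitum}K$ is a field so that $\mathbb{B}'\otimes_{\finitum}K$ is (semi)simple and the multiplication map onto $K\mathbb{B}'$ is injective, and then derive directness from the clash between semisimplicity of $K\mathbb{B}'$ and nilpotence of $K\mathbb{R}$. The only cosmetic differences are that the paper first reduces to a single simple block via the minimal central idempotents, and phrases the last step as ``a nonzero two-sided ideal of a semisimple algebra contains a nontrivial idempotent, which cannot lie in $K\mathbb{R}$,'' which is the same fact you invoke.
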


\begin{proof}
 If  $\mathbb{B}=\bigoplus_{i=1}^nP_i\mathbb{B}$, where $P_1,\dots,P_n$ are the minimal central
idempotents of $\mathbb{B}$, then
$K\mathbb{B}=\bigoplus_{i=1}^nP_iK\mathbb{B}$. Therefore, we can assume that $\mathbb{B}'$ is simple. If
$\mathbb{B}'=\matrici_n(\mathbb{L})$ where $\mathbb{L}/\mathbb{F}$
is a finite extension, then $K\mathbb{B}'$ is a quotient of
$K\otimes_{\finitum}\mathbb{B}'\cong
\matrici_n(K\otimes_{\finitum}\mathbb{L})$. Since $\mathbb{F}$ is
the full field of constants of $K$, the tensor product
$K\otimes_{\finitum}\mathbb{L}$ is a field. It follows that
$K\otimes_{\finitum}\mathbb{B}'$ is simple and therefore equals
$K\mathbb{B}'$. The last statement follows since the two sided
ideal generated by an arbitrary non-invertible element $u$ in
$K\mathbb{B}'$ contains a non-trivial idempotent, and therefore
$u$ cannot belong to $K\mathbb{R}$.\end{proof}

\begin{cor} For any maximal order
$\Da$ in $\matrici_2(K)$, the semi-simple part of the ring
$\Da(X)$ is isomorphic to an element in the set
$\{\finitum,\finitum\times
\finitum,\mathbb{L},\matrici_2(\finitum)\}$, where $\mathbb{L}$ is
the unique quadratic extension of $\finitum$. Only the first two
cases a non-trivial radical $\mathbb{R}$  can exists, and in that case $\mathrm{dim}_K{K\mathbb{R}}=1$.\qed
\end{cor}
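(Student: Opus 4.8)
The plan is to pin down $\mathbb{B}:=\Da(X)$ by combining its Wedderburn--Malcev decomposition with the embedding of $\mathbb{B}$ into $\matrici_2(K)$ supplied by the generic fibre. First I would use \S\ref{bundles} to write $\Da=\mathcal{E}\!\textnormal{\footnotesize{$nd$}}_{\oink_X}(E)$ for a rank-$2$ vector bundle $E$ on the complete smooth curve $X$ (complete since $\oink_X(X)=\finitum$), so that $\mathbb{B}=\mathrm{End}_{\oink_X}(E)$ is the module of global sections of a coherent sheaf on a projective $\finitum$-scheme, hence a finite-dimensional $\finitum$-algebra, and $\mathbb{B}$ embeds in $\mathrm{End}_K(E\otimes_{\oink_X}K)\cong\matrici_2(K)$ because $E$ is torsion free. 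As $\finitum$ is perfect, write $\mathbb{B}=\mathbb{B}'\oplus\mathbb{R}$ with $\mathbb{R}$ the radical and $\mathbb{B}'\cong\mathbb{B}/\mathbb{R}$ a unital semisimple subalgebra. The preceding proposition then gives $\mathrm{dim}_{\finitum}\mathbb{B}'=\mathrm{dim}_K(K\mathbb{B}')$, so that $K\mathbb{B}'\cong K\otimes_{\finitum}\mathbb{B}'$, and it gives that $K\mathbb{B}'+K\mathbb{R}$ is direct inside $\matrici_2(K)$; in particular $\mathrm{dim}_{\finitum}\mathbb{B}'\le 4$.

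Next I would list the possible $\mathbb{B}'$. Its central idempotents map to a family of pairwise orthogonal nonzero idempotents of $\matrici_2(K)$, which can have at most two members, so $\mathbb{B}'$ has at most two simple factors. If $\mathbb{B}'=\matrici_m(\mathbb{E})$ is simple, then $K\mathbb{B}'=\matrici_m(\mathbb{E}K)$ with $\mathbb{E}K$ a field — this is where the hypothesis that $\finitum$ is the whole field of constants of $K$ intervenes, exactly as in the proof of the preceding proposition — and from $m^2[\mathbb{E}:\finitum]\le 4$ and the fact that a subfield of $\matrici_2(K)$ has $K$-degree dividing $2$, the only possibilities are $\mathbb{B}'\in\{\finitum,\mathbb{L},\matrici_2(\finitum)\}$. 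If $\mathbb{B}'$ has two simple factors, its two central idempotents decompose $K^2=V_1\oplus V_2$ with $\mathrm{dim}_K V_i=1$, so each factor embeds into $\mathrm{End}_K(V_i)\cong K$ and must equal $\finitum$, giving $\mathbb{B}'=\finitum\times\finitum$. This proves the first assertion.

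For the radical: since $\mathbb{R}^n=0$ for some $n$, also $(K\mathbb{R})^n=0$, so $K\mathbb{R}$ is a $K$-subspace of $\matrici_2(K)$ all of whose elements are nilpotent; a short computation — normalise a nonzero element to $\bmattrix0100$ and exploit that the sum of any two elements of the subspace is again nilpotent — shows such a subspace is at most one-dimensional. Hence $\mathbb{R}\neq 0$ forces $\mathrm{dim}_K(K\mathbb{R})=1$; this gives $\mathrm{dim}_{\finitum}\mathbb{B}'\le 3$, excluding $\matrici_2(\finitum)$, and it also excludes $\mathbb{L}$, because then $K\mathbb{R}$, being the $K$-span of the two-sided ideal $\mathbb{R}$, would be a nonzero module over $K\mathbb{B}'$, which in that case is a field of $K$-degree $2$, so $\mathrm{dim}_K(K\mathbb{R})$ would be even. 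Thus a nontrivial radical occurs only for $\mathbb{B}'\in\{\finitum,\finitum\times\finitum\}$, and then $\mathrm{dim}_K(K\mathbb{R})=1$, as claimed. I expect the main obstacle to be the bookkeeping in these last two steps — verifying that every a priori admissible semisimple $\finitum$-algebra, and every possible radical, is genuinely excluded by the embedding into $\matrici_2(K)$; a possibly cleaner alternative is to first classify, up to conjugacy, the $K$-subalgebras of $\matrici_2(K)$ — the scalars, a split maximal torus, the quadratic subfields, the dual numbers, a Borel, and the whole algebra — and then to read both assertions off that short list.
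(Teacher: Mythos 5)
Your proposal is correct and follows exactly the route the paper intends: the corollary is stated with no written proof precisely because it is meant to be read off from the preceding Proposition ($\mathrm{dim}_{\finitum}\mathbb{B}'=\mathrm{dim}_K(K\mathbb{B}')\le 4$ together with the directness of $K\mathbb{B}'+K\mathbb{R}$), and you supply the routine case analysis — at most two orthogonal idempotents, subfields of $\matrici_2(K)$ of degree dividing $2$, a nilpotent $K$-subspace of $\matrici_2(K)$ being at most one-dimensional, and the parity argument ruling out a radical over $\mathbb{L}$. Nothing is missing.
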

\begin{ex}\label{ex33}
 The bundles $E$ admitting an $\mathbb{L}$-vector space structure are those with $\Da_E(X)\cong\mathbb{L}$
or $\Da_E(X)\cong\matrici_2(\finitum)$. By the Matric Units Theorem \cite[p.30]{rowen}  we have
that $\Da(X)=\matrici_2(\finitum)$ implies
$\Da=\matrici_2(\oink_X)$. Moreover, $\mathbb{L}$ embeds into $\Da_E(X)$ if and only if
 $\Ha_{\mathbb{L}}=\mathbb{L}\otimes_{\finitum}\oink_X$ embeds into $\Da_E$.  Note that $\Ha_{\mathbb{L}}$ is the maximal order of $L=K\mathbb{L}$, and also the push-forward
sheaf $\Ha_{\mathbb{L}}=f_*(\oink_Y)$, where
$Y=X\times_{\mathrm{Spec}(\finitum)}\mathrm{Spec}(\mathbb{L})$ and $f:Y\rightarrow
X$ is the projection on the first coordinate.  The extension $L/K$ is unramified, whence $L\subseteq\Sigma$ (cf. Ex.\ref{ex21}). Let $\sigma$ be the generator of $\mathrm{Gal}(L/K)$. Then by the definition of the Artin map,
$|[B,L/K]|=\sigma^{\mathrm{deg}(B)}$ for any divisor $B$. Since $F(\Ha_{\mathbb{L}})=\Sigma\cap L=L$,
the order $\Ha_{\mathbb{L}}$
embeds in, precisely, the spinor genera $\Phi$
satisfying any of the following equivalent conditions:
\begin{enumerate}
\item For some (any) $\Da\in\Phi$, we have
$\rho(\Da_0,\Da)\big|_L=\mathrm{Id}_L$. \item For some (any) $\Da=c\Da_0c^{-1}\in\Phi$,
the integer $\mathrm{deg}\Big(\mathrm{div}\big(N(c)\big)\Big)$ is even. \item $\Phi$ contains an
order of the form $\Da_B$, where $B$ is a divisor of even degree.
\end{enumerate}
\end{ex}

\section{Split maximal orders}\label{split}

Next we study in greater detail the order $\Da_B$ defined in the
introduction. Note that if $B=D+\mathrm{div}(b)$, for any idele
$b$, then $\Da_B=c\Da_Dc^{-1}$, where $c=\sbmattrix b001$. It
follows that, when $B$ is a principal divisor, then
$\Da_B\cong\Da_0=\matrici_2(\oink_X)$, and therefore, the ring of
global sections $\Da_B(X)$ is isomorphic to the matrix algebra
$\matrici_2(\finitum)$. If $B$ is not principal, then
$\mathfrak{L}^B$ and $\mathfrak{L}^{-B}$ cannot have a global
section simultaneously. In fact, if $\mathrm{div}(f)+B\geq0$ and
$\mathrm{div}(g)-B\geq0$, then
$B=\mathrm{div}(g)=\mathrm{div}(f^{-1})$. We conclude that
$\Da_B(X)\cong (\finitum\times\finitum)\oplus V$, where the radical is $V=\mathfrak{L}^{\pm B}u$ with $u^2=0$.

\begin{prop}\label{class}
Assume $B$ is not principal. 
Any matrix $U$ satisfying  $\Da_B=U\Da_DU^{-1}$ has the form $\sbmattrix ab0c$,
in which case  $B$ is linearly equivalent to $D$, or  $\sbmattrix 0ac0$,
in which case  $B$ is linearly equivalent to $-D$.
\end{prop}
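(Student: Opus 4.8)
The plan is to pass from the orders to the underlying vector bundles, extract the linear equivalence from the Krull--Schmidt decomposition, and then recover the normal form of $U$ from the ``canonical line'' that each order carves out in $K^2$.

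First I would write $\Da_D=\mathrm{End}_{\oink_X}(E_D)$ and $\Da_B=\mathrm{End}_{\oink_X}(E_B)$ with $E_D=\oink_Xe_1\oplus\mathfrak{L}^{-D}e_2$ and $E_B=\oink_Xe_1\oplus\mathfrak{L}^{-B}e_2$ realised as full $\oink_X$-lattice sheaves in $K^2=Ke_1\oplus Ke_2$ (cf.\ \S\ref{bundles}). Then $U\Da_DU^{-1}=\mathrm{End}_{\oink_X}(UE_D)$, so the hypothesis reads $\mathrm{End}_{\oink_X}(UE_D)=\mathrm{End}_{\oink_X}(E_B)$, and by the lattice criterion of \S\ref{bundles} there is an idele $d\in J_X$ with $UE_D=dE_B$. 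Since $U$ acts as an $\oink_X$-sheaf isomorphism, $\oink_X\oplus\mathfrak{L}^{-D}\cong E_D\cong dE_B\cong\mathfrak{L}^{-\mathrm{div}(d)}\oplus\mathfrak{L}^{-B-\mathrm{div}(d)}$. As $B$ is not principal, $\Da_B\not\cong\matrici_2(\oink_X)$, hence $D$ is not principal and $\mathfrak{L}^{-D}\not\cong\oink_X$; uniqueness of the decomposition of a bundle on $X$ into line bundles then forces the two unordered pairs of classes in $\mathrm{Pic}(X)$ to coincide. This leaves exactly two matchings: $\mathrm{div}(d)\sim0$ together with $D\sim B$, or $\mathrm{div}(d)\sim-B$ together with $D\sim-B$. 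In particular $B\sim D$ or $B\sim-D$.

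To obtain the normal form I would track rank-one idempotents. Since $B$ (hence $D$) is not principal, $\mathfrak{L}^{B}$ and $\mathfrak{L}^{-B}$ have no simultaneous global section, so $\Da_B(X)$ is a triangular $\finitum$-algebra $(\finitum\times\finitum)\oplus V_B$ with $V_B=\mathfrak{L}^{\pm B}(X)u$, and likewise for $\Da_D(X)$. Conjugation by $U$ is a ring isomorphism $\Da_D(X)\to\Da_B(X)$, so it matches their rank-one idempotents; by functoriality of $\mathrm{End}_{\oink_X}$ the image of such an idempotent is a direct-summand line sub-bundle of $E_B$, and the idempotent whose image line has the larger degree is rigid — its image line is a fixed one of the two coordinate axes $Ke_1,Ke_2$, because (non-principality of $B$ again) the complementary summand admits no nonzero homomorphism into the other axis. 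Hence $U$ carries the canonical line of $\Da_D$ to that of $\Da_B$; together with the complementarity of idempotents and the first part, this forces $U$ to either stabilise the flag $Ke_1\subset K^2$, so that $U=\sbmattrix ab0c$ and $B\sim D$, or to interchange the two axes, so that $U=\sbmattrix0ac0$ and $B\sim-D$.

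The step that will need the most care — and which I expect to be the main obstacle — is this last one: a priori conjugation sends the idempotent $e_{11}=\sbmattrix1000$ only to an idempotent of $\Da_B(X)$ that is conjugate \emph{within the unit group of $\Da_B(X)$} to $e_{11}$ or $e_{22}=\sbmattrix0001$, and one has to rule out a nontrivial unipotent correction, or absorb it into the normaliser of $\Da_D$. This is precisely the point at which non-principality of $B$ is indispensable, since it collapses the relevant part of $\mathrm{Aut}(E_B)$ and of $\mathrm{Aut}(E_D)$. Carrying it out is a finite case check keyed to which of $\mathfrak{L}^{B}(X)$, $\mathfrak{L}^{-B}(X)$ (equivalently which of $\mathfrak{L}^{D}(X)$, $\mathfrak{L}^{-D}(X)$) vanishes.
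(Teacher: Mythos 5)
Your first half --- extracting $B\sim D$ or $B\sim -D$ from $UE_D=dE_B$ and the Krull--Schmidt uniqueness of the line-bundle summands --- is correct, and it is a genuinely different (and self-contained) route: the paper instead reads off the linear equivalence \emph{after} it has the shape of $U$, by comparing the $(2,1)$-entries of $U\Da_DU^{-1}$ with those of $\Da_B$. That part of your argument is fine.

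The gap is in the second half, and it sits exactly where you flagged it: the idempotent-tracking only pins down \emph{one} column of $U$. The rank-one idempotents of $\Da_B(X)$ with image the higher-degree summand are indeed all equal to a fixed axis, but the complementary idempotents have images $K(e_1+ye_2)$ (resp.\ $K(e_1+ze_2)$ on the $\Da_D$ side) that sweep out the whole radical, so ``complementarity'' determines nothing about the second column. The deferred ``finite case check'' is therefore the entire content of the proposition, and as you have set it up the conclusion is actually false in one configuration: if $\mathfrak{L}^B(X)\neq\{0\}$ and $y\in\mathfrak{L}^B(X)$ is nonzero, then $U=\sbmattrix y110=\sbmattrix 1y01\sbmattrix 0110$ conjugates $\Da_{-B}$ onto $\Da_B$ and is neither triangular nor anti-diagonal; your argument up to the flagged step is consistent with this $U$, and ``absorbing the unipotent into the normaliser of $\Da_D$'' changes $U$ rather than constraining it. What is missing is the normalization the paper performs at the outset: replace $B$ (and, implicitly, $D$) by its negative so that $\mathfrak{L}^{-B}(X)=\mathfrak{L}^{-D}(X)=\{0\}$. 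After that, the paper's device closes the case in one line: the $K$-spans $W_B$, $W_D$ of the global sections satisfy $W_B=UW_DU^{-1}$ and are \emph{both} the upper-triangular algebra (if the radical is nonzero) or both the diagonal algebra (if it vanishes), so $U$ lies in the normalizer of the Borel --- which is the Borel --- or of the split torus --- which is the monomial matrices. You should either adopt that normalization explicitly and then run your idempotent argument (which then does work), or use the span $W_D$ directly as the paper does; without one of these, the last step does not merely ``need care,'' it fails.
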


\begin{proof}
Let $U$ be as stated. Observe that, since $\Da_B\cong\Da_{-B}$ we can assume $\mathfrak{L}^{-B}(X)=\{0\}$.
If $W_B$ and $W_D$ denote the $K$-vector
spaces spanned by $\Da_B(X)$ and $\Da_D(X)$ respectively, then
$W_B=UW_DU^{-1}$.  Let $\{E_{i,j}\}_{i,j}$ be the cannonical basis
of the matrix algebra $\matrici_2(K)$. There are two cases two be
considered:
\begin{enumerate}
\item If $\mathfrak{L}^B(X)\neq\{0\}$, then
$W_B=W_D=KE_{1,1}\oplus KE_{2,2}\oplus KE_{1,2}$. \item If
$\mathfrak{L}^B(X)=\{0\}$, then $W_B=W_D=KE_{1,1}\oplus KE_{2,2}$.
\end{enumerate}
 In the first case, $U$ has the form
$\sbmattrix ab0c$. In particular we have
$$\mathfrak{L}^{-B}E_{2,1}=E_{2,2}\Da_BE_{1,1}=
E_{2,2}(U\Da_DU^{-1})E_{1,1}=a^{-1}c\mathfrak{L}^{-D}E_{2,1}.$$ We
conclude that $B=D+\mathrm{div}(ac^{-1})$, and therefore $B$ and
$D$ are linearly equivalent. In the second case $U$ has either the
form $\sbmattrix a00c$, which is similar to
the previous case, or the form $\sbmattrix 0ac0$, so that $B=-D+\mathrm{div}(ac^{-1})$, and $B$ is
linearly equivalent to $-D$.
\end{proof}

\paragraph{Proof of Theorem \ref{th2}:}
Let $e\in\Da(U)$ be a non-trivial idempotent and let $Z=Ke\oplus K(1-e)$ be the split semisimple commutative subalgebra generated by $e$. Let $\mathfrak{Z}\cong\oink_Xe\oplus\oink_X(1-e)$ be the unique maximal order in $Z$. By identifying the
vector space $K^2$ with $Z$, we see that the only local lattices that are invariant under $\mathfrak{Z}_P$ are
the fractional ideals $\big(\pi_P^re+\pi_P^s(1-e)\big)\mathfrak{Z}_P$, whence the corresponding maximal orders,
the ones containing  $\mathfrak{Z}_P$,  lie in a maximal path in the tree (or in the language of buildings, 
an apartment). We conclude that the
 maximal orders in that path are split, and moreover, this path has the form:
\[ { \xygraph{
!{<0cm,0cm>;<.8cm,0cm>:<0cm,.8cm>::}
 !{(2,0)}*+{\bullet}="a"  !{(2,0.4)}*+{{}^{\Da_{B-P}}}="a1"
 !{(4,0)}*+{\bullet}="b"   !{(4,0.4)}*+{{}^{\Da_{B}}}="b1"
!{(6,0) }*+{\bullet}="c"   !{(6,0.4)}*+{{}^{\Da_{B+P}}}="c1"
!{(0,0)}*+{}="e"
!{(8,0)}*+{}="d"
 "e"-@{.}"a" "a"-"b" "b"-"c"
"c"-@{.}"d"  } }.
\]
Recall from the classification of the split maximal orders given
earlier that two orders in this line can be conjugate if and only
if there exists different integers $N$ and $M$ such that the 
divisor classes $\bar B,\bar P\in\mathrm{Pic}(X)$ satisfy $\bar
B+N\bar P=\pm(\bar B+M\bar P)$. Since $P$ has positive degree,
only the equation with a negative sign can have non-trivial
solutions. In fact, this implies $2\bar B=(N+M)\bar P$. Replacing
$B$ by $B+kP$ if needed, we can assume $(N+M)\in\{0,1\}$, whence
either $2\bar B=0$ and $\bar B$ is an element of order $2$ in
$\mathrm{Pic}_0(X)$ or $2\bar B=P$, whence in the latter case the
place $P$ has even degree.\qed

\begin{rmk}
Note that, when $2\bar B=0$ or $2\bar B=P$, the image of this line
in the C-graph has, respectively, one of the following forms:
\begin{figure}[h]
\centering \mbox{\subfigure[$2B=0$]{\xygraph{
!{<0cm,0cm>;<.8cm,0cm>:<0cm,.8cm>::}
 !{(2,0)}*+{\bullet}="a"  !{(2.3,0.3)}*+{{}^{\Da_{B-P}}}="a1"
 !{(4,0)}*+{\bullet}="b"  !{(4.3,0.3)}*+{{}^{\Da_B}}="b1"
!{(4,-0.6)}*+{}="c"
 !{(0,0)}*+{}="e"
 "e"-@{.}"a" "a"-"b"
"b"-@{}"c" } } \quad \subfigure[$2B=P$]{\xygraph{
!{<0cm,0cm>;<.8cm,0cm>:<0cm,.8cm>::}
 !{(2,0)}*+{\bullet}="a"  !{(2.3,0.3)}*+{{}^{\Da_{B-P}}}="a1"
 !{(4,0)}*+{\bullet}="b"  !{(3.8,0.3)}*+{{}^{\Da_B}}="b1"
!{(0,0)}*+{}="e" "e"-@{.}"a" "a"-"b"
"b"-@`{p+(1,-1),p+(1,0),p+(1,1)}"b" } } }
\end{figure}

In the sequel, they will be called folded lines of type (a) or (b) respectively.
\end{rmk}

\section{Valencies in the S-graph}\label{vale}

In all of this section, let $\finitum=\finitum_p$ and $N=\mathrm{deg}(P)$, so that $\finitum(P)=\finitum_{p^N}$.
  Recall that the stabilizer in $\Gamma$ of a vertex $\Da$ is
the group invertible elements $\Da(X)^*$ of the ring of global
sections $\Da(X)$, and its action on the set of neighbors of $\Da$ can be realized identifying $\Da(X)^*$ with a subgroup of  $PGL_2\big(\finitum(P)\big)$, which acts naturally on the set of $\finitum(P)$-points of the projective line $\mathbb{P}_1(\finitum)$
\cite[\S II.1.1]{trees}. The number of orbits for all orders is given in Table 1. In this table,
$\epsilon$ is $0$ when $P$ has odd degree and $1$ otherwise.
\begin{table}
\begin{tabular}{ | c | c | c | c | c | }
  \hline
Type&$\Da(X)^*$ & number of orbits (valency)
 \\  \hline\hline
I&    $\matrici_2(\finitum)$ & $1+\frac{p^{N-1}-1}{p^2-1}+\frac{p}{p+1}\epsilon$ \\
\hline II&  $\finitum+\mathbb{R}$ & $p^{N-r}+1$
\\ \hline III&
 $(\finitum\times\finitum)+\mathbb{R}$ & $2+\frac{p^{N-r}-1}{p-1}$  \\  \hline
IV& $\mathbb{L}$ & $\frac{p^N+2p\epsilon+1}{p+1}$  \\
  \hline
\end{tabular}
\caption{Types of orders, and number of orbits in each case.}
\end{table}
To prove these values we compute, in each case, the number of elements in every conjugacy class of
$\Da(X)^*$, i.e., the number of matrices with any possible Jordan form. The number of invariant points
in each case is immediate (see Table 2), hence the result follows by an straightforward application of  Polya's formula  \textbf{(reference)}.
 \footnotesize
\begin{table}
\begin{tabular}{ | c | c | c | c | c | }
  \hline
{\scriptsize Jordan forms\footnotesize} & $\bmattrix b10b$ & $\bmattrix b00b$ & $\bmattrix b00c$ & NEV
 \\ \hline
{\scriptsize Fixed points\footnotesize} & $1$ & $p^N+1$ & $2$ & $2\epsilon$  \\
  \hline\hline
   I & $(p-1)^2(p+1)$ & $p-1$ & $\frac12(p^2-1)(p-2)p$ & $\frac12(p-1)^2p^2$  \\
\hline  II & $(p-1)(p^r-1)$ & $p-1$ & $0$ & $0$
\\ \hline
 III & $(p-1)(p^r-1)$ & $p-1$ & $(p-1)(p-2)p^r$ & $0$  \\  \hline
IV & $0$ & $p-1$ & $0$ & $(p-1)p$  \\
  \hline
\end{tabular}
\caption{Number of elements in $\Da(X)^*$ with every Jordan form for different types of orders.}
\end{table}
\normalsize In Table 2, NEV stands for \emph{no eigenvalues} on
$\finitum$. These elements have eigenvalues over the extension
$\finitum(P)$ if and only if $N=\deg P$ is even. We denote by $r$ the
dimension of the image of the radical $\mathbb{R}$ in the algebra
$\matrici_2\Big(\finitum(P)\Big)$. Certainly $r\leq N$. 

\begin{ex} \label{e41} We can have vertices of valency $1$ (or
endpoints) only if $N=1$ and in this case they are exactly the
maximal orders representing $\mathbb{L}\oink_X$ (compare to \cite[\S5]{Papikian}).
\end{ex}

\begin{ex} If $\alge$ is a division algebra, there are no
radicals, so in particular $r=0$. Furthermore, every vertex is in
case II or case IV. We conclude that a vertex
has valency $\frac{p^N+2p\epsilon+1}{p+1}$ if the corresponding
maximal order represents $\mathbb{L}\oink_X$ and $p^N+1$
otherwise (compare to \cite[\S5]{Papikian}).
\end{ex}

\begin{ex}\label{e44} When $\Da=\Da_B$ is split and $\mathrm{deg}(B)\geq0$, the neighbors corresponding to
$0$ is $\Da_{B-P}$, whence  $r$ is the dimension of
$\mathfrak{L}^B(X)/\mathfrak{L}^{B-P}(X)$, i.e. $r=l(B)-l(B-P)$ in
the notations of \cite[Ch.8]{Fulton}. In particular, when
$r=N$, the corresponding vertex has valency 2. By
Riemann-Roch's Theorem, this holds whenever
$\deg(B)\geq2g-2+N$ \cite[\S II.2.3, Lem.6]{trees}.
\end{ex}

A vertex in the C-graph $C_P$ is said to be unramified for the covering  $C_P\rightarrow S_P$
if it has the same valency than one (an therefore, every) point in its pre-image. For unramified vertices,
 the valencies in the C-graph are the ones we have already computed.

\begin{prop}\label{noram} A maximal order of the form $\Da_B$, where $B$ is a
divisor, is unramified,
unless $0$ is linearly equivalent to $2B$ but not $B$. 
\end{prop}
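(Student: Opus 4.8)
The plan is to recast ``unramified at $\Da_B$'' in terms of the conjugation normaliser $N=N_{\alge^{*}}(\Da_B)=\{g\in\alge^{*}\mid g\Da_Bg^{-1}=\Da_B\}$ and then to compute $N$ with the dictionary between maximal orders and vector bundles of \S\ref{bundles}. Write $\Da=\Da_B$. Recall from \S\ref{vale} that the stabiliser of (a lift of) $\Da$ in $\Gamma$ is the image $\Gamma_\Da$ of $\Da(X)^{*}$ in $\mathrm{PGL}_2(K)$, while the stabiliser in $G$ is the image $G_\Da$ of $N$; since $\Gamma$ is normal in $G$, every pre-image of $\Da$ has $\Gamma$-stabiliser $G$-conjugate to $\Gamma_\Da$, so the $S_P$-valency is constant along the fibre and equals the number of $\Gamma_\Da$-orbits on the set of neighbours of $\Da$ in $T_P$, whereas the $C_P$-valency equals the number of $G_\Da$-orbits on the same set. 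As $\Gamma_\Da\subseteq G_\Da$, the first step is just to record that $\Da$ is unramified as soon as $G_\Da$ and $\Gamma_\Da$ induce the same partition of the neighbours of $\Da$, in particular whenever $N=K^{*}\Da(X)^{*}$.

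For the second step, write $\Da=\Da_{E_B}$ with $E_B=\oink_X\oplus\mathfrak{L}^{B}$. Since $g\Da_{E_B}g^{-1}=\Da_{gE_B}$, an element $g\in\alge^{*}$ lies in $N$ exactly when $gE_B$ and $E_B$ differ by a line-bundle twist inside $K^{2}$, i.e.\ $gE_B=E_B\otimes\mathfrak L$ for some invertible sheaf $\mathfrak L$. As $g$ is a $K$-linear automorphism, $gE_B\cong E_B$, hence $E_B\otimes\mathfrak L\cong E_B$; taking determinants gives $\mathfrak L^{\otimes2}\cong\oink_X$, and comparing the (unique, by Krull--Schmidt) line-bundle summands of $E_B\otimes\mathfrak L$ and of $E_B$ forces $\mathfrak L\cong\oink_X$ or $\mathfrak L\cong\mathfrak{L}^{B}$, the second being a genuinely new possibility precisely when $2B$ is principal and $B$ is not. (This trichotomy is also exactly the content of Proposition~\ref{class} applied with $D=B$: a normalising matrix has the form $\sbmattrix ab0c$ or---only when $2B$ is principal---the form $\sbmattrix 0ac0$.)

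The third step disposes of everything outside the exceptional case, i.e.\ when $2B$ is not principal or $B$ is principal: then $\mathfrak L\cong\oink_X$ in all cases, so after rescaling $g$ by an element of $K^{*}$ we get $gE_B=E_B$, i.e.\ $g\in K^{*}\mathrm{Aut}(E_B)=K^{*}\Da(X)^{*}$; hence $N=K^{*}\Da(X)^{*}$, $G_\Da=\Gamma_\Da$, and $\Da$ is unramified. In particular this covers $\Da_0=\matrici_2(\oink_X)$. The fourth step treats the exceptional case, $2B$ principal and $B$ not. Then $\deg B=0$, so $\mathfrak{L}^{\pm B}(X)=0$ and $\Da(X)^{*}$ is the diagonal torus $\finitum^{*}\times\finitum^{*}$ relative to the decomposition $E_B=\oink_X\oplus\mathfrak{L}^{B}$; thus $\Gamma_\Da$ fixes exactly the two points of $\mathbb{P}_1\big(\finitum(P)\big)$ attached to the two summands, and these are the two neighbours $\Da_{B-P}$ and $\Da_{B+P}$ on the apartment of $T_P$ given by that decomposition, hence two distinct singleton $\Gamma_\Da$-orbits. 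Now the second alternative of step two genuinely occurs: since $2B$ is principal there is $W\in N$ with $WE_B=E_B\otimes\mathfrak{L}^{B}$, which one may take anti-diagonal, $W=\sbmattrix 0ac0$ with $a\in\finitum^{*}$ and $\mathrm{div}(c)=-2B$. Using $\mathfrak{L}^{-B}\cong\mathfrak{L}^{B}$, a short divisor computation shows that conjugation by $W$ fixes $\Da$ and interchanges $\Da_{B-P}$ with $\Da_{B+P}$; therefore $G_\Da$ fuses these two $\Gamma_\Da$-orbits, the $C_P$-valency at $\Da$ is strictly smaller than the $S_P$-valency, and $\Da$ is ramified. (This vertex is the endpoint of a ``folded line of type (a)'' in the sense of the Remark following Proposition~\ref{class}.)

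I expect the fourth step to demand the most care: one must identify precisely which points of $\mathbb{P}_1\big(\finitum(P)\big)$ the apartment neighbours $\Da_{B\pm P}$ correspond to, and carry out the twist bookkeeping (realising each relevant bundle $\mathfrak{L}^{C}e_1\oplus\mathfrak{L}^{C'}e_2$ in the normalised form $\oink_X\oplus\mathfrak{L}^{\,\cdot}$) that shows $W$ swaps them; in the earlier steps one must be careful that ``valency'' is being counted as a number of orbits, and that the normality of $\Gamma$ in $G$ really makes the $S_P$-valency constant along each fibre over a vertex of $C_P$.
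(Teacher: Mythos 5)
Your argument is correct and arrives at the same basic dichotomy as the paper, but by a different mechanism and with an extra half. The paper's proof splits by cases: for $B$ not principal it quotes Proposition \ref{class} (itself proved by an explicit computation with the $K$-span of the global sections) to see that any global normalising matrix is upper triangular or anti-diagonal, checks directly that an upper-triangular normaliser lies in $K^*\Da_B(X)^*$, and notes that the anti-diagonal case forces $B$ to be linearly equivalent to $-B$; for $B$ principal it argues instead that $\Da_B(X)\cong\matrici_2(\finitum)$ is simple, so the induced automorphism is inner. You obtain the same trichotomy for the twisting bundle $\mathfrak L$ from the Krull--Schmidt decomposition of $E_B=\oink_X\oplus\mathfrak{L}^B$ together with the determinant; this treats principal and non-principal $B$ uniformly and replaces the paper's hands-on verification that upper-triangular normalisers contribute nothing new by the one-line observation that $\mathfrak L\cong\oink_X$ gives $g\in K^*\mathrm{Aut}(E_B)$, at the mild cost of invoking Krull--Schmidt for vector bundles on a complete curve, which the paper never needs. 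Your fourth step proves the converse --- that in the exceptional case $\Da_B$ really is ramified --- which is not part of the paper's proof of this proposition at all: the paper defers it to Example \ref{ex44} and the Case \textbf{B} analysis of \S\ref{multi}, and your orbit-fusion argument (the anti-diagonal $W$ swapping the two singleton orbits $[0]$ and $[\infty]$ of the diagonal torus $\Da_B(X)^*$) is precisely what those passages rely on. One trivial slip: with the paper's convention $\mathfrak{L}^{-\mathrm{div}(d)}=d\oink_X$ one needs $\mathrm{div}(c)=2B$, not $-2B$, for the anti-diagonal $W$ to normalise $\Da_B$ (the paper's Example \ref{ex44} carries the same sign ambiguity); nothing in your argument depends on it.
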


\begin{proof}
Assume first that $B$ is not principal.
Let $U$ be a global matrix satisfying $U\Da_BU^{-1}=\Da_B$.
By Proposition \ref{class}, we conclude
that $U=\sbmattrix ab0c$ or $U=\sbmattrix 0ac0$. In the first case
$ac^{-1}\mathfrak{L}^{-B}=\mathfrak{L}^{-B}$, and therefore
$ac^{-1}\in\finitum$. Replacing $U$ by a scalar multiple if
needed, we can assume $a,c\in\finitum^*$. Then comparing the first
coordinate of the identity $U\Da_B U^{-1}=\Da_B$, we obtain
$\oink_X+a^{-1}b\mathfrak{L}^{-B}=\oink_X$, and therefore
$b\mathfrak{L}^{-B}\subset\oink_X$. It follows that
$b\oink_X\subseteq\mathfrak{L}^B$, whence $b\in\mathfrak{L}^B(X)$.
We conclude that $U\in\Da(X)^*$. In the second case $B$ is
linearly equivalent to $-B$ by Proposition 
\ref{class}.

Assume now that $B$ is principal. Then any Global matrix $U$ satisfying $U\Da_BU^{-1}=\Da_B$ must, in particular,
satisfy $U\Da_B(X)U^{-1}=\Da_B(X)$. Since $\Da_B(X)$ is simple, every automorphism of it is inner. It follows that
$U\in K^*\Da_B(X)^*$.
\end{proof}

\begin{ex}\label{ex44} When  $2B=\mathrm{div}(f)$, we have $U\Da_BU^{-1}=\Da_B$
for $U=\sbmattrix 0f10$. In fact, by Proposition \ref{class}, any matrix  $U'=\sbmattrix 0ac0$
satisfying this condition is in the coset   $\sbmattrix 0f10K^*\Da_B(X)^*$.
\end{ex}

\begin{ex}\label{basic3}  Assume $X$ is the projective curve defined by the equation $y^2z=x^3+xz^2+z^3$, and let $P=[0;1;0]$ be the point at infinity. Then $\mathrm{Pic}(X)/\langle\bar P\rangle\cong\mathrm{Pic}_0(X)$ is a cyclic group with $4$ elements generated by the class of either $Q=[0;1;1]$ or $R=[0;-1;1]$. Then element of order $2$ is the class of $S=[1;0;1]$. The C-graph has three lines corresponding to the sets $\{\bar 0\}$,  $\{\bar S\}$,  $\{\bar Q, \bar R\}$. The first two lines are in the trivial component, since  $\bar 0$ and  $\bar S$ are squares. We conclude that the C-graph has the shape:
\[ \fbox{ \xygraph{
!{<0cm,0cm>;<.8cm,0cm>:<0cm,.8cm>::} 
!{(6,0.5)}*+{}="a" 
!{(7.5,0.5)}*+{\bullet}="c" !{(7.5,0)}*+{{}^{\Da_{Q-2P}}}="c1" 
!{(9,0.5)}*+{\bullet}="d" !{(9,0)}*+{{}^{\Da_{Q-P}}}="d1" 
!{(10.5,0.5)}*+{\bullet}="e" !{(10.5,0)}*+{{}^{\Da_{Q}}}="e1" 
!{(12,0.5)}*+{\bullet}="f" !{(12,0)}*+{{}^{\Da_{Q+P}}}="f1" 
!{(13.5,0.5)}*+{}="g" 
!{(9,1.5)}*+{*}="h" 
!{(0,0.5)}*+{\bullet}="i" !{(0,0)}*+{{}^{\Da_0}}="i1"
 !{(1.5,0.5)}*+{\bullet}="j"  !{(1.5,0)}*+{{}^{\Da_P}}="j1"
 !{(3,0.5)}*+{\bullet}="k"   !{(3,0)}*+{{}^{\Da_{2P}}}="k1"  
!{(4.5,0.5)}*+{}="l"
  !{(1.5,1.5)}*+{*}="m"
 !{(1.5,2.5)}*+{\bullet}="n"  !{(0.8,2.3)}*+{{}^{\Da_{S-P}}}="n1" 
 !{(3,2.5)}*+{\bullet}="o"  !{(3,2)}*+{{}^{\Da_{S}}}="o1"
  !{(4.5,2.5)}*+{\bullet}="p"   !{(4.5,2)}*+{{}^{\Da_{S+P}}}="p1"
 !{(6,2.5)}*+{}="q"  
"a"-@{.}"c" "c"-"d" "d"-"h" "e"-"d" "e"-"f" "f"-@{.}"g"
"p"-@{.}"q" "i"-"j" "j"-"k" "n"-"o" "o"-"p" "n"-"m" "j"-"m" "k"-@{.}"l"
} }
\]
The asterisques above represent unknown portions of the graph. The explicit description of the trivial component $S_P(\Da_0)$  given in \cite[p.87]{takahashi}, shows that $C_P(\Da_0)$ is as follows:
 \[ \xygraph{
!{<0cm,0cm>;<.8cm,0cm>:<0cm,.8cm>::} 
!{(1,0.5)}*+{\bullet}="i" !{(1,0)}*+{{}^{\Da_0}}="i1"
 !{(2.5,0.5)}*+{\bullet}="j"  !{(2.5,0)}*+{{}^{\Da_P}}="j1"
 !{(4,0.5)}*+{\bullet}="k"   !{(4,0)}*+{{}^{\Da_{2P}}}="k1"  
!{(5.5,0.5)}*+{}="l"
  !{(0.5,1.5)}*+{\bullet}="a"  !{(0,1.3)}*+{{}^{\mathfrak{V}_1}}="a1"
  !{(1.5,1.5)}*+{\bullet}="m"   !{(1.5,1)}*+{{}^{\mathfrak{O}}}="m1"
  !{(2.5,1.5)}*+{\bullet}="b"  !{(3,1.3)}*+{{}^{\mathfrak{V}_2}}="b1"
 !{(1.5,2.5)}*+{\bullet}="n"  !{(0.8,2.3)}*+{{}^{\Da_{S-P}}}="n1" 
 !{(3,2.5)}*+{\bullet}="o"  !{(3,2)}*+{{}^{\Da_{S}}}="o1"
  !{(4.5,2.5)}*+{\bullet}="p"   !{(4.5,2)}*+{{}^{\Da_{S+P}}}="p1"
 !{(6,2.5)}*+{}="q"  
"p"-@{.}"q" "i"-"j" "j"-"k" "n"-"o" "o"-"p" "n"-"m" "j"-"b" "a"-"m" "b"-"m" "k"-@{.}"l"
 }\]
In the notations of the reference, $\mathfrak{O}$ and $\mathfrak{V}_1$ are the images of the vertices
 $o$ and $v(-1)$,respectively, while $\mathfrak{V}_2$ is the image of both $v(1)$ and $v(\infty)$. The ramified
vertices are $\mathfrak{O}$ and $\Da_{S-P}$, the latter being the image of $v(0)$. The order $\mathfrak{O}$ is of type II, since $o$ has valency $4$ in the S-graph.
\end{ex}

\paragraph{Proof of Theorem 3} Assume first that $\mathrm{deg}(P)$ is odd. In particular $\Sigma_U=K$ (cf. \S \ref{ndos}), whence $C_P$ is connected.
We claim that $G=\Gamma$ in this case. Let $M$ be such that
 $M\Da_0(U)M^{-1}=\Da_0(U)$. The determinant of $M$ has even valuation at every place 
$Q\in U$, and therefore also on $P$,since principal divisors have degree $0$. Since the Picard group in this case is  $\mathrm{Pic}\Big(\mathbb{P}_1(\finitum)\Big)\cong \enteri$, $\mathrm{det}(M)$ is a square. We conclude that $\lambda M\in\Da_0(U)^*$ for some $\lambda\in K$, and the result follows.

Assume now that $N=\mathrm{deg}(P)$ is even. In this case $\Sigma_U=\Sigma$ is a quadratic extension (cf. Ex.\ref{ex21}), whence $C_P$ is connected. Then $P$ is linearly equivalent to some divisor of the form $2B$.
It follows that $\det(M)\in uK^{*2}\cup K^{*2}$ for any $u\in K^*$ with $\mathrm{div}(u)=P-2B$, whence we conclude
as before that $|G/\Gamma|\leq 2$. Since the line containing $\Da_B$ is a folded line of type (b), we must have equality.
Furthermore, a matrix whose determinant is not a square at $P$ cannot have an  invariant vertex since no vertices is ramified. We conclude that each orbit has two vertices.
\qed

\begin{ex}
When  $\alge=\matrici_2(K)$, $X\cong\mathbb{P}_1(\finitum)$, and $N=\deg P$ is odd, these graphs are the ones described in \cite[\S II.2.4]{trees}. When $N=1$, we have the graph described in \S1 by valency considerations alone.
In particular, no other vertices exists.
This proves Grothendieck-Birkhoff Theorem \cite[Th.2.1]{burban} in the particular case of two dimensional vector bundles over a finite field.
\end{ex}

\begin{ex}\label{basic2} The C-graphs for $N=2$ and $N=4$ are as follows:
\[ \fbox{ \xygraph{
!{<0cm,0cm>;<.8cm,0cm>:<0cm,.8cm>::} 
!{(0,4)}*+{\bullet}="a" !{(0.4,3.6)}*+{{}^{\Da_Q}}="a1" 
!{(2,4)}*+{\bullet}="b" !{(2.4,3.6)}*+{{}^{\Da_{3Q}}}="b1" 
!{(4,4)}*+{\bullet}="c" !{(4.4,3.6)}*+{{}^{\Da_{5Q}}}="c1" 
!{(6,4)}*+{}="d" 
!{(0,2)}*+{\bullet}="e" !{(0.4,1.6)}*+{{}^{\Da_0}}="e1" 
!{(2,2)}*+{\bullet}="f" !{(2.4,1.6)}*+{{}^{\Da_{2Q}}}="f1" 
!{(4,2)}*+{\bullet}="g" !{(4.4,1.6)}*+{{}^{\Da_{4Q}}}="g1" 
!{(6,2)}*+{}="h"  
!{(3,5)}*+{N=2}="z" 
!{(1,-0.37)}*+{}="w"
"a"-@`{p+(-1,-1),p+(-1,0),p+(-1,1)}"a" "a"-"b" "b"-"c" "c"-@{.}"d"
"e"-@`{p+(-1,-1),p+(-1,0),p+(-1,1)}@{=}^1"e" "e"-"f" "f"-"g"
"g"-@{.}"h"} }
 \fbox{ \xygraph{
!{<0cm,0cm>;<.8cm,0cm>:<0cm,.8cm>::} 
!{(2,4)}*+{\bullet}="a"  !{(2.4,3.6)}*+{{}^{\Da_0}}="a1" 
!{(4,4)}*+{\bullet}="b" !{(4.4,3.6)}*+{{}^{\Da_{4Q}}}="b1" 
!{(6,4)}*+{\bullet}="c" !{(6.4,3.6)}*+{{}^{\Da_{8Q}}}="c1" 
!{(8,4)}*+{}="d" 
!{(2,2)}*+{\bullet}="e" !{(2.5,2.2)}*+{{}^{\Da_{2Q}}}="e1" 
!{(4,2)}*+{\bullet}="f" !{(4.4,2.2)}*+{{}^{\Da_{6Q}}}="f1" 
!{(6,2)}*+{\bullet}="g" !{(6.4,2.2)}*+{{}^{\Da_{10Q}}}="g1"  
!{(8,2)}*+{}="h" 
!{(2,0.5)}*+{\bullet}="i" !{(2.4,0.7)}*+{{}^{\Da_{5Q}}}="i1" 
!{(4,0.5)}*+{\bullet}="j" !{(4.4,0.7)}*+{{}^{\Da_Q}}="j1" 
!{(6,0.5)}*+{\bullet}="k" !{(6.4,0.7)}*+{{}^{\Da_{3Q}}}="k1"  
!{(8,0.5)}*+{}="l" 
!{(0,0.5)}*+{}="m"  
!{(4,5)}*+{N=4}="z"
"m"-@{.}"i" "i"-"j" "j"-"k" "k"-@{.}"l"
"j"-@`{p+(-1,-1),p+(0,-1),p+(1,-1)}@{=}^p"j" "a"-"e"
"a"-@`{p+(-1,-1),p+(-1,0),p+(-1,1)}@{=}^{p+1}"a" "a"-"b" "b"-"c"
"c"-@{.}"d" "e"-@`{p+(-1,-1),p+(-1,0),p+(-1,1)}"e" "e"-"f" "f"-"g"
"g"-@{.}"h" } } 
\]
The double lines are deduced by valency considerations. The line joining $\Da_{2Q}$ and $\Da_0$ for $N=4$ must be there since both lines are in the same connected component.
 When $N\geq6$, valencies alone do not suffice  to compute the multiplicity of all edges.
\end{ex}

\section{Multiplicity of edges}\label{multi}

 In this section, we show how the number of edges $M=M(\Da,\Da')$ can be computed in terms of 
the number of neighbors  $N=N(\Da,\Da')$ defined in $\S1$. As before, we limit ourselves to split edges.
Given a split maximal order $\Da=\Da_B$, the set of neighbors corresponding to a given edge in $S_P(\Da)$ is in correspondence with an orbit of $\Da(X)^*$ on  the $\finitum(P)$-points of the proyective line $\mathbb{P}^1(\finitum)$ (\S\ref{vale}). With this in mind, we divide the computation into three cases:

\subparagraph{Case A ($2\bar B\neq 0$): } Assume $\mathfrak{L}^{-B}(X)=\{0\}$, so that global sections are upper triangular. The orders in this case are unramified vertices for the cover  $S_P\rightarrow C_P$. In particular,  $C_P$ is locally homeomorphic to $S_P$, so it suffices to consider the  $\Da(X)^*$-action. On  $\mathbb{P}^1(\finitum)_{\finitum(P)}$, an element of the form  $\sbmattrix ab01$ acts as $t\mapsto at+b$. The only possible finite  solutions of $t=at+b$ , with $a\in\finitum$ ocur when $t$ is in the $r$-dimensional vector space of possible values of $b$ (\S\ref{vale}), and the latter form a single orbit, namely $[0]$. 
The class of $0$ corresponds to $\Da_{B-P}$ and the class of $\infty$ corresponds to $\Da_{B+P}$. 

\subparagraph{Case B ($2\bar B=0$, but $\bar B\neq 0$):}
These vertices have the same structure than the ones in case 1, except that they are ramified (cf. Ex.\ref{ex44}). In $C_P$, they are endpoints of folded lines of type (a). In this case the radical is $\mathbb{R}=\mathfrak{L}^B(X)E_{1,2}=0$. Conjugation by $F=\sbmattrix 0f10$ induces the map $x\mapsto f(P)/x$ on $\mathbb{P}^1(\finitum)_{\finitum(P)}$. The orbits under $\Da(X)^*$ have the form $[t]=\finitum^*t$. It follows that the invariant orbits $[\lambda]$ under conjugation by $F$ are given by the solutions of $f(P)/t=at$ for $a\in\finitum^*$, or $t^2=a^{-1}f(P)$. 

There are several sub-cases to be considered here.
\begin{itemize}
\item When the characteristic
$\mathrm{char}(\finitum)$ is $2$ or $\mathrm{deg}(P)$ is odd, there is always a unique invariant orbit corresponding to an order $\Da_1$.
In this case we can have $\tilde\Da_1\neq\Da_{B+P}$ (case \textbf{B1}), or  
$\tilde\Da_1=\Da_{B+P}$ (case \textbf{B2}).
\item When $\mathrm{char}(\finitum)\neq2$, $\mathrm{deg}(P)$ is even, and $f(P)$ is not a square in $\finitum(P)$,
there are no invariant orbits. This is case \textbf{B3}.
\item When $\mathrm{char}(\finitum)\neq2$, $\mathrm{deg}(P)$ is even, and $f(P)$ is a square in $\finitum(P)$,
there are two invarian orbits corresponding to $\Da_1$ and $\Da_2$. There are 4 diferent subcases. \begin{enumerate}
\item The orders
$\Da_1$ and $\Da_2$ can be isomorphic, in which case they can be isomorphic to 
 $\Da_{B+P}$  (case \textbf{B4}), or not (case \textbf{B5}).
\item The orders
$\Da_1$ and $\Da_2$ can fail to be isomorphic, in which case there can be one isomorphic to 
 $\Da_{B+P}$  (case \textbf{B6}), or none (case \textbf{B7}).
\end{enumerate}\end{itemize}

\subparagraph{Case C ($\bar B=0$):} 
In this case we can assume $B=0$. Then $\Da_B$ is an unramified vertex of the cover $\Gamma\backslash T\rightarrow G\backslash T$ (Prop.\ref{noram}). It suffices therefore to find the number of elements in each orbit of the usual action of $\Da_0(X)^*\cong \mathrm{PGL}_2(\finitum)$ on $\mathbb{P}^1(\finitum)_{\finitum(P)}$. Let $\mathbb{L}$ be the unique quadratic extension of $\finitum$. We know from the specific shape of the graphs for $N=1$ and $N=2$ (\S\ref{intro} and \S\ref{vale} respectively) that  $\mathrm{PGL}_2(\finitum)$ has one orbit on $\mathbb{P}^1(\finitum)_{\finitum}$ and two orbits on $\mathbb{P}^1(\finitum)_{\mathbb{L}}$. 
We  conclude that $\mathbb{P}^1(\finitum)_{\mathbb{L}}\backslash\mathbb{P}^1(\finitum)_{\finitum}$
is an orbit. Let $\hat\Da$ be the maximal order corresponding to this orbit.
 Since any equation of the type $x=(ax+b)/(cx+d)$ has all its roots in a quadratic extension, all elements outside $\mathbb{L}$ have trivial stabilizer. Let $\mu$ be such that $\mathbb{L}=\finitum(\mu)$.There are three subcases:
\begin{itemize}
\item If  $P$ has odd degree, $\mu$ is not an $\finitum(P)$-point of the projective plane. This is case \textbf{C1}.
\item If  $P$ has even degree,  the class $[\mu]$ corresponds to 
an order $\hat\Da$.  we can have $\hat\Da\cong\Da_{B+P}$ (case \textbf{C2}) or not  (case \textbf{C3}).
\end{itemize}

Table 3 covers the number of neighbors corresponding to every orbit in each case. Table 4 allows us to
compute the multiplicity of edges $M$ in terms of the number of neighbors $N$ in each case, assuming that we know
the identity of the exceptional orders $\Da_1$, $\Da_2$, or $\hat\Da$. The order $\Da_{B-P}$ is also considered exceptional to simplify the table. Certainly, $\Da_{B-P}\cong\Da_{B+P}$ except in case \textbf{A}.

\begin{table}
\begin{tabular}{|c||c|c|c|c|c|}
\hline
case&$[0]$&$[\infty]$&$[\lambda]$ & $[\mu]$&  other\\ \hline\hline
\textbf{A}&  $p^r$&$1$&-&-& $(p-1)p^r$\\ \hline
\textbf{B}& $2$&-& $(p-1)$&-& $2(p-1)$\\ \hline
\textbf{C}& $p+1$&-&-& $p(p-1)$& $p(p^2-1)$\\ \hline
\end{tabular}\caption{Wheights of edges.}\end{table}

\begin{table}
\begin{tabular}{|c||c|c|c|}
\hline
case&$\Da_{B+P}$& Exceptional if $\ncong\Da_{B+ P}$ & Other\\
\hline\hline
\textbf{A}&$1+\frac{N-1}{(p-1)p^r}$&$1+\frac{N-p^r}{(p-1)p^r}$&$\frac{N}{(p-1)p^r}$\\
\hline
\textbf{B1}&$2+\frac{N-(p-1)-2}{2(p-1)}$&-&$\frac{N}{2(p-1)}$\\
\hline
\textbf{B2}&$1+\frac{N-2}{2(p-1)}$&$1+\frac{N-(p-1)}{2(p-1)}$&$\frac{N}{2(p-1)}$\\
\hline
\textbf{B3}&$1+\frac{N-2}{2(p-1)}$&-&$\frac{N}{2(p-1)}$\\
\hline
\textbf{B4}&$2+\frac{N-2}{2(p-1)}$&-&$\frac{N}{2(p-1)}$\\
\hline
\textbf{B5}&$1+\frac{N-2}{2(p-1)}$&$1+\frac{N}{2(p-1)}$&$\frac{N}{2(p-1)}$\\
\hline
\textbf{B6}&$2+\frac{N-(p-1)-2}{2(p-1)}$&$1+\frac{N-(p-1)}{2(p-1)}$&$\frac{N}{2(p-1)}$\\
\hline
\textbf{B7}&$2+\frac{N-2}{2(p-1)}$&$1+\frac{N-(p-1)}{2(p-1)}$&$\frac{N}{2(p-1)}$\\
\hline
\textbf{C1}&$1+\frac{N-(p+1)}{p(p^2-1)}$&-&$\frac{N}{p(p^2-1)}$\\
\hline
\textbf{C2}&$1+\frac{N-(p+1)}{p(p^2-1)}$&$1+\frac{N-p(p-1)}{p(p^2-1)}$&$\frac{N}{p(p^2-1)}$\\
\hline
\textbf{C3}&$2+\frac{N-(p^2+1)}{p(p^2-1)}$&-&$\frac{N}{p(p^2-1)}$\\
\hline
\end{tabular}\caption{$M$ as a function of $N$.}\end{table}

Note that whenever $p>3$ we can tell this formulas apart by congruence conditions on $N$, 
except for cases \textbf{B4} and \textbf{B5}, where
the presence of two equal invariant orbits can be mistaken by a single orbit. In actual computations, it is preferable avoiding these vertices if at all possible.

\section{Representations.}\label{last}
In this section we show how explicit knowledge of the graphs can be used in the 
study of representation of orders and conversely.
Let $\Ha$ be a suborder of the maximal order $\Da$. In particular,
this implies that $\Ha(U)\subseteq\Da(U)$ for any affine open
subset $U\subseteq X$. On the other hand, if $\Ha$ is an order satisfying 
$\Ha(U)\subseteq\Da(U)$ for $U=X\backslash\{P\}$, then $\Ha\subseteq\Da$ if and only if  $\Ha_P\subseteq\Da_P$.
For any effective divisor $B$ we define the order $\Ha^{[B]}=\oink_X+\mathfrak{L}^{B}\Ha$.
Representation of orders relates to the Bruhat-Tits tree by the following fundamental result:

\begin{prop}\label{citedde}\cite[Prop.2.4]{Eichler2}.
Let $P$ be a prime divisor of a global function field $K=K(X)$.
 Let $\Ha$ be an arbitrary order in $\alge$.
Then $\Ha^{[tP]}$ is contained in a maximal order $\Da$ if and only if there exists a maximal order
$\Da'$ containing $\Ha$ such that the natural distance $\delta_P$ in the local Bruhat-Tits tree $T_P$
satisfies $\delta_P(\Da,\Da')\leq t$.
\end{prop}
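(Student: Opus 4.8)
The plan is to reduce the statement to a purely local assertion at $P$ about lattices in $K_P^2$, and then to realize $\Da'$ by an explicit lattice. Since $\alge$ splits at $P$, the vertices of $T_P$ are the homothety classes of $\oink_P$-lattices $L\subset K_P^2$, every maximal $\oink_P$-order having the form $\mathrm{End}_{\oink_P}(L)$, and $\delta_P$ is the graph distance. Now $\Ha^{[tP]}$ coincides with $\Ha$ at every place $Q\neq P$, while $(\Ha^{[tP]})_P=\oink_P+\pi_P^t\Ha_P$; hence $\Ha^{[tP]}\subseteq\Da$ forces $\Ha_Q\subseteq\Da_Q$ for $Q\neq P$ and is otherwise determined by the inclusion at $P$. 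I would therefore fix the restriction $\Da|_U$ to $U=X\backslash\{P\}$ and let $\Da'$ range over the maximal $X$-orders with $\Da'|_U=\Da|_U$, i.e.\ over the vertices of $T_P$ — which is the setting in which the proposition is applied. Since $\Ha^{[tP]}|_U=\Ha|_U$, both ``$\Ha^{[tP]}\subseteq\Da$'' and ``$\Ha\subseteq\Da'$'' then reduce to their assertions at $P$, and the claim becomes the local equivalence: for $\Da_P=\mathrm{End}_{\oink_P}(L)$,
$$\oink_P+\pi_P^t\Ha_P\subseteq\Da_P\iff\exists\,\text{a lattice }L'\text{ with }\Ha_PL'\subseteq L'\text{ and }\delta_P\!\big(\mathrm{End}(L),\mathrm{End}(L')\big)\leq t.$$

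For ``$\Leftarrow$'' I would use the elementary-divisor description of distance on the tree: $\delta_P(\mathrm{End}(L),\mathrm{End}(L'))\leq t$ holds if and only if one can choose homothety representatives with $\pi_P^tL\subseteq L'\subseteq L$ (a description symmetric in the two vertices). Given such representatives and any $y\in\Ha_P$, one has $(\pi_P^ty)L=y(\pi_P^tL)\subseteq yL'\subseteq L'\subseteq L$ because $\Ha_PL'\subseteq L'$; as scalars from $\oink_P$ preserve every lattice, this gives $(\oink_P+\pi_P^t\Ha_P)L\subseteq L$, i.e.\ $(\Ha^{[tP]})_P\subseteq\Da_P$.

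The implication ``$\Rightarrow$'' is the heart of the matter, and the point is to write down the right $L'$. Assuming $(\Ha^{[tP]})_P\subseteq\mathrm{End}_{\oink_P}(L)$, I would take $L':=\pi_P^t\Ha_PL$. This is a lattice sandwiched as $\pi_P^tL\subseteq L'\subseteq L$: the first inclusion because $1\in\Ha_P$, the second because $\pi_P^t\Ha_P\subseteq\oink_P+\pi_P^t\Ha_P\subseteq\mathrm{End}(L)$. Thus $L/L'$ is killed by $\pi_P^t$, so $\delta_P(\mathrm{End}(L),\mathrm{End}(L'))\leq t$; and $\Ha_PL'=\Ha_P\,\pi_P^t\Ha_PL=\pi_P^t\Ha_PL=L'$ (using $\Ha_P\Ha_P=\Ha_P$), so $\Ha_P\subseteq\mathrm{End}_{\oink_P}(L')$. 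Setting $\Da'_P:=\mathrm{End}_{\oink_P}(L')$ and keeping $\Da'_Q:=\Da_Q$ for $Q\neq P$ defines a maximal $X$-order $\Da'$ (a maximal order may be altered at one completion, cf.\ \S\ref{intro}) with $\Ha\subseteq\Da'$ and $\delta_P(\Da,\Da')=\delta_P(\Da_P,\Da'_P)\leq t$, as required. Running this backwards from such a $\Da'$ recovers $\Ha^{[tP]}\subseteq\Da$ via the ``$\Leftarrow$'' step together with $\Ha_Q=\Ha^{[tP]}_Q\subseteq\Da'_Q=\Da_Q$ for $Q\neq P$.

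The only genuinely delicate point is the lattice characterization of $\delta_P\leq t$ and its symmetry in the two endpoints, which is exactly what lets the single sandwich $\pi_P^tL\subseteq L'\subseteq L$ serve in both directions; everything else is the two short computations above, the substantive input being the choice $L'=\pi_P^t\Ha_PL$.
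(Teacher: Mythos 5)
The paper does not actually prove this proposition: it is imported verbatim as \cite[Prop.~2.4]{Eichler2}, so there is no internal argument to compare yours against. Judged on its own, your proof is correct and self-contained. The reduction to the single place $P$ is legitimate (the distance $\delta_P(\Da,\Da')$ is only meaningful when $\Da$ and $\Da'$ agree on $U=X\setminus\{P\}$, which is indeed the standing convention here, and $\Ha^{[tP]}$ coincides with $\Ha$ away from $P$); the lattice characterization $\pi_P^tL\subseteq L'\subseteq L$ of $\delta_P\leq t$ is the standard elementary-divisor description; and the choice $L'=\pi_P^t\Ha_PL$ does exactly what is needed, with the verifications $\pi_P^tL\subseteq L'\subseteq L$ and $\Ha_PL'=L'$ resting only on $1\in\Ha_P$ and $\Ha_P\Ha_P=\Ha_P$. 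Your argument is also consistent in spirit with the one the paper does write out for the special case $\Ha_P$ maximal (the proposition on $I(\Ha^{[P]}|\Da)$ in \S\ref{last}), where the neighbor $\Da'$ is built by the same local surgery. Two small remarks: you silently use $(\Ha^{[tP]})_P=\oink_P+\pi_P^t\Ha_P$, which is the convention forced by the statement and the examples, although the displayed definition $\Ha^{[B]}=\oink_X+\mathfrak{L}^{B}\Ha$ combined with the paper's definition of $\mathfrak{L}^B$ would literally give $\pi_P^{-t}$ (a sign slip in the paper, not in your proof); and your argument presupposes that $\alge$ splits at $P$ so that $T_P$ is the full tree of lattice classes, which is the only case in which the proposition is used, the ramified case being degenerate.
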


\begin{ex}
Let $\Ha=\mathbb{L}\oink_X$ (cf. Ex. \ref{e41}), and let
$\Ha'=\Ha^{[3Q+4R+S]}$, where $Q,R,S$ are points
of degrees $1,2,4$ respectively. We define the intermediate orders
$\Ha''=\Ha^{[3Q]}$, and
$\Ha'''=\Ha^{[3Q+4R]}$. Recall that the only
maximal order containing a copy of $\Ha$ is $\Da_0$ since is the only vertex with valency 1 (cf. Ex. \ref{e41}). Then the
diagrams in \S1 and \S5 show that $\Ha''$ is contained
in $\Da_{tQ}$ for $t\leq3$, while $\Ha'''$ is contained in
$\Da_{tQ}$ for $t\leq11$, and finally $\Ha'$ is contained in
$\Da_{tQ}$ for $t\leq15$.
\end{ex}

\begin{ex}
Let  $X$ be the curve over $\finitum_2$ defined by the projective equation $x^2z+xz^2=y^3+yz^2+z^3$, and let $P=[1,0,0]$. Then $S_P(\Da_0)$ has the following structure \cite[\S II.2.4.4]{trees}:
\[ { \xygraph{
!{<0cm,0cm>;<.8cm,0cm>:<0cm,.8cm>::}  
!{(0,1.5)}*+{\bullet}="a" !{(0,1.8)}*+{{}^{x_0}}="a1" 
!{(2,1.5)}*+{\bullet}="b" !{(2,1.8)}*+{{}^{x_1}}="b1" 
 !{(4,1.5)}*+{\bullet}="c" !{(4,1.8)}*+{{}^{x_2}}="c1" 
!{(6,1.5)}*+{\bullet}="d" !{(6,1.8)}*+{{}^{x_3}}="d1" 
 !{(8,1.5)}*+{}="y" 
!{(4,0.5) }*+{\bullet}="e" !{(4,0)}*+{{}^{z_0}}="e1" 
!{(6,0.5)}*+{\bullet}="f"  !{(6,0)}*+{{}^{z_1}}="f1" 
!{(8,1) }*+{\bullet}="z"  !{(8.4,1)}*+{{}^{t_1}}="z1" 
!{(8,0) }*+{\bullet}="v"  !{(8.4,0)}*+{{}^{t_2}}="v1" 
"a"-"b" "b"-"c" "c"-"d" "d"-@{.}"y"
 "e"-"f" "f"-"z" "v"-"f"   "b"-"e"  }}
\]
The only vertices corresponding to split maximal orders are the $x_i$. It follows that the maximal order corresponding to
$t_1$ contains a copy of $\Ha^{[3P]}$, where $\Ha\cong\oink_X\times\oink_X$, but not  $\Ha^{[2P]}$.
\end{ex}

We say that $\Ha$ is optimally contained in $\Da$, if $\Ha\subseteq\Da$, but $\Ha$ is not contained in $\Da^{[B]}$ 
for any effective
divisor $B$.  Let $\Ha$ be an order of maximal rank, and let $\Da$ be a maximal order. The number of isomorphic copies of the order $\Ha$ optimally contained into the maximal order $\Da$ is denoted $I(\Ha|\Da)$.   The set of neighbors of $\Da$ is denoted $\mathcal{V}(\Da)$. We let $N_P(\Da,\Da')$ be as in \S\ref{intro}.

\begin{prop}
Let $\Ha$ be an order of maximal rank such that $\Ha_P$ is
maximal. Let $\Da$ be a maximal order containing $\Ha$. Then the
number $I(\Ha^{[P]}|\Da)$ is given by the formula:
$$I(\Ha^{[P]}|\Da)=\sum_{\Da'\in\mathcal{V}(\Da)}
N_P(\Da,\Da')I(\Ha|\Da').$$
\end{prop}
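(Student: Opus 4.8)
The plan is to count pairs $(\Ha',\Da')$ where $\Ha'\cong\Ha$ is optimally contained in $\Da$ after passing up one level of the divisor operation, and to match these against neighbors of $\Da$ in $T_P$. First I would observe that any copy $\Ha_1\cong\Ha$ with $\Ha_1\subseteq\Da$ automatically satisfies $\Ha_1^{[P]}\subseteq\Da^{[P]}$, but we want the copies of $\Ha^{[P]}$ that are \emph{optimally} contained in $\Da$ itself. The key is Proposition \ref{citedde}: since $\Ha_P$ is maximal, for an embedding $\phi:\Ha\hookrightarrow\alge$ the order $\phi(\Ha)^{[P]}$ sits in $\Da$ iff there is a maximal order $\Da'$ with $\phi(\Ha)\subseteq\Da'$ and $\delta_P(\Da,\Da')\le 1$, i.e. $\Da'=\Da$ or $\Da'\in\mathcal V(\Da)$. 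Moreover $\Ha_P$ maximal forces $\Ha_P^{[P]}=\Ha_P$ locally at $P$, so at $P$ the order $\phi(\Ha)^{[P]}$ is already maximal; hence $\phi(\Ha)^{[P]}$ is contained in a \emph{unique} maximal order near $\Da$, namely the one among $\{\Da\}\cup\mathcal V(\Da)$ sharing the local completion of $\phi(\Ha)$ at $P$. Optimality of $\Ha^{[P]}$ in $\Da$ (not being contained in $\Da^{[B]}$ for effective $B$) then translates, via the local-to-global remark preceding Proposition \ref{citedde}, into the condition $\delta_P(\Da,\Da')=1$ exactly, i.e. $\Da'\in\mathcal V(\Da)$ strictly, \emph{unless} $\Ha$ was already optimal in $\Da$ — I would handle that bookkeeping carefully.

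Next I would set up the counting bijection. Each isomorphic copy of $\Ha^{[P]}$ optimally contained in $\Da$ determines the maximal order $\Da'$ in which the underlying copy of $\Ha$ is optimally contained, with $\Da'\in\mathcal V(\Da)$ (after the optimality analysis above), because raising $\Ha$ to $\Ha^{[P]}$ corresponds precisely to moving one step in the tree from $\Da'$ toward $\Da$ and stopping. Conversely, given $\Da'\in\mathcal V(\Da)$ and a copy of $\Ha$ optimally contained in $\Da'$, forming $\Ha^{[P]}$ produces a copy optimally contained in $\Da$; the number of copies of $\Ha$ optimally inside a maximal order isomorphic to $\Da'$ is $I(\Ha|\Da')$ by definition, and the number of maximal orders $\Da'\in\mathcal V(\Da)$ isomorphic to a given class is $N_P(\Da,\Da')$ by the definition of $N_P$ in \S\ref{intro}. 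Summing over isomorphism classes $\Da'$ of neighbors therefore gives the stated formula, provided the correspondence is a genuine bijection.

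The main obstacle I expect is verifying that this correspondence is well defined and bijective — concretely, that distinct optimal copies of $\Ha^{[P]}$ in $\Da$ come from distinct pairs (copy of $\Ha$ in a neighbor, neighbor), and that no copy is overcounted when $\Ha\subseteq\Da'$ for several neighbors $\Da'$ simultaneously, or undercounted when $\Ha$ itself is already optimally in $\Da$. The resolution should be that, since $\Ha_P$ is maximal, the completion $(\Ha^{[P]})_P$ is a fixed maximal order of $\alge_P$, and a copy of $\Ha^{[P]}$ optimal in $\Da$ determines its completion at $P$, which in turn determines $\Da'$ uniquely as the unique neighbor of $\Da$ whose completion at $P$ contains $(\Ha)_P$; the copy of $\Ha$ obtained by restricting to $U=X\setminus\{P\}$ and re-completing at $P$ via $\Da'_P$ is then the unique copy in $\Da'$ giving back the original upon applying $[P]$. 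The optimality transfer — $\Ha^{[P]}$ optimal in $\Da$ iff $\Ha$ optimal in the associated $\Da'$ — follows because $\Da^{[B]}\supseteq\Ha^{[P]}$ for effective $B$ prime to $P$ would force $\Da'^{[B]}\supseteq\Ha$, and the component of $B$ at $P$ is irrelevant since both sides are maximal at $P$. Once these verifications are in place the identity is immediate.
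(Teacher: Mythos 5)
Your overall strategy is the paper's: reduce to Proposition \ref{citedde} with $t=1$, attach to each copy of $\Ha$ the unique neighbor $\Da'$ of $\Da$ containing it, and count pairs. But the step you present as ``the key'' in your first paragraph is false. You claim that $\Ha_P$ maximal forces $(\Ha^{[P]})_P=\Ha_P$, so that $\Ha^{[P]}$ is already maximal at $P$ and hence contained in a unique maximal order near $\Da$. By construction $\Ha^{[P]}=\oink_X+\mathfrak{L}^{P}\Ha$ agrees with $\Ha$ away from $P$ and is modified precisely at $P$, where its completion is the non-maximal local order $\oink_P+\pi_P\Ha_P$; the maximal orders of $\alge_P$ containing that completion are exactly the closed ball of radius $1$ around the vertex $\Ha_P$ in $T_P$, not a single vertex. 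If your claim were true one would have $\Ha^{[P]}=\Ha$, Proposition \ref{citedde} would be vacuous, and the identity being proved would read $I(\Ha|\Da)=\sum_{\Da'}N_P(\Da,\Da')I(\Ha|\Da')$, which is absurd. The statement actually needed --- and the one the paper proves, by gluing $\Da_Q$ for $Q\neq P$ with $\Ha_P$ at $P$ --- is that $\Ha$ itself (not $\Ha^{[P]}$) lies in a unique neighbor $\Da'$ of $\Da$, namely the one with $\Da'_P=\Ha_P$, with optimality of $\Ha^{[P]}\subseteq\Da$ ruling out $\Da'=\Da$. You do state this correct version in your last paragraph, so the bijection can be repaired, but as written the justification offered in the first paragraph is wrong, not merely imprecise.

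The same confusion resurfaces in your optimality transfer, where you dismiss the component of $B$ at $P$ ``since both sides are maximal at $P$'': they are not. For $B=B_0+vP$ with $v\geq1$ you need the local observation that $\oink_P+\pi_P\Ha_P\subseteq\oink_P+\pi_P^{v}\Da_P$ is impossible when $\delta_P(\Ha_P,\Da_P)=1$, since the radius-$v$ ball around $\Da_P$ is not contained in the radius-$1$ ball around $\Ha_P$; the prime-to-$P$ part $B_0$ is then handled exactly as you describe. With these two repairs your argument coincides with the paper's proof.
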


\begin{proof}
 It is immediate from Proposition \ref{citedde} that $\Ha^{[P]}$ is optimally contained in a maximal order $\Da$ if and only if there exists a maximal order $\Da'$ containing $\Ha$ such that the natural distance $\delta_P$ in the local Bruhat-Tits tree $T_P$ is exactly $1$.
Assume $\Ha^{[P]}\subseteq\Da$.  Since
$(\Ha^{[P]})_Q=\Ha_Q$ at every place $Q\neq P$, then $\Ha$ is contained in the order $\Da'$ defined by the local conditions:
$$\Da'_Q=\left\{\begin{array}{rcl}
\Da_Q&\textnormal{ if }&Q\neq P\\
\Ha_Q&\textnormal{ if }&Q=P\end{array}\right|.$$
This order is maximal and coincide with $\Da$ outside of $P$, whence it corresponds to a vertex in the Bruhat-Tits tree. On the other hand, $\Da'$ must be a neibour of $\Da$, since $\Da'_P=\Ha_P$ and $\Da_P$ contains $\Ha_P^{[P]}$.
Assume now that $\Da''$ is a second neighbour containing $\Ha$. Then $\Da''$ coincide with $\Da$, and therefore also with $\Da'$, outside of $P$. Furthermore $\Da''_P\supseteq\Ha_P=\Da'_P$, whence $\Da''=\Da'$.
We conclude that  $\Ha$ is contained in a unique $P$-neighbor of $\Da$
 and the result follows. 
\end{proof}

\paragraph{Proof of Theorem \ref{th3}.} 
It $\Ha$ is a maximal order, then it follows from previous proposition that 
$I(\Ha^{[P]}|\Da)=N_P(\Da,\Ha)$.
By a second application of the same result, for any pair of different places $(P_1,P_2)$, we have
$$I(\Ha^{[P_1+P_2]}|\Da)=\sum_{\Da'}N_{P_1}(\Da,\Da')N_{P_2}(\Da',\Ha),$$
where the sum extends over all maximal orders, but only a finite number of terms are non-zero. As the left hand side of this equation is symetric, the result folows.
\qed

\paragraph{Proof of Theorem \ref{th4}.} By an straightforward computation, both sides of (\ref{recurr}) coincide in every sufficiently high row or column. It follows that their difference is boundedly supported, and commutes with $N_1$. The result follows if we prove that the only finitely supported matrix that commutes with $N_1$ is the zero matrix.

Let $B=\sbmattrix A{O_1}{O_2}{O_3}$, where $A$ denotes a minimal finite block, and each $O_i$ 
is an infinite block of $0$'s.
Then, if $N_1=\sbmattrix CDEF$ is the analogous decomposition for $N_1$, the condition $BN_1=N_1B$ impplies $AD=0$
and $EA=0$. Looking at the first column of $AD$ and the first row of $EA$ we obtain the equations
$$\left(\begin{array}{cccc}0&\cdots&0&q\end{array}\right)A=
\left(\begin{array}{cccc}0&\cdots&0&0\end{array}\right),
\qquad A\left(\begin{array}{c}0\\ \vdots\\0\\1\end{array}\right)=
\left(\begin{array}{c}0\\ \vdots\\0\\0\end{array}\right),$$
where $q$ is either $p$ or $p+1$. This impplies that the last row and the last column of $A$ are $0$ 
and this contradicts the minimality of $A$. The result follows.
\qed

\begin{ex} The first few of the matrices $N_i$ are $$N_2=N_1^2-2pI,\qquad N_3=N_1^3-3pN_1,\qquad N_4=N_1^4-4N_1^2+2pI,$$ $$N_5=N_1^5-5pN_1^3+5pN_1,\qquad N_6=N_1^6-6pN_1^4,+9p^2N_1^2-2p^3I,$$
where $I$ denotes the identity matrix.
\end{ex}

\section{Examples}\label{exa}

\begin{ex} This example uses only Proposition \ref{citedde}.
Assume $X\cong\mathbb{P}_1(\mathbb{F})$ and $N=\deg P=5$.
 By valency considerations, and recalling that the
graph is bipartite in this case, we know the C-graph have the form:
\[ { \xygraph{
!{<0cm,0cm>;<.8cm,0cm>:<0cm,.8cm>::} !{(0,4) }*+{}="x"
 !{(2,4)}*+{\bullet}="a"  !{(2,3.6)}*+{{}^{\Da_{7P}}}="a1" 
 !{(4,4)}*+{\bullet}="b"  !{(3.6,3.6)}*+{{}^{\Da_{2P}}}="b1" 
 !{(6,4)}*+{\bullet}="c"  !{(6.5,3.6)}*+{{}^{\Da_{3P}}}="c1" 
 !{(8,4)}*+{\bullet}="d"  !{(8,3.6)}*+{{}^{\Da_{8P}}}="d1" 
 !{(10,4)}*+{}="y"
 !{(6,2) }*+{\bullet}="e"  !{(5.6,2.3)}*+{{}^{\Da_0}}="e1" 
!{(8,2)}*+{\bullet}="f"   !{(8,1.6)}*+{{}^{\Da_{5P}}}="f1" 
 !{(10,2) }*+{}="z"
 !{(0,0) }*+{}="v"
!{(2,0) }*+{\bullet}="g"  !{(2,0.4)}*+{{}^{\Da_{6P}}}="g1" 
 !{(4,0)}*+{\bullet}="h"  !{(3.6,0.4)}*+{{}^{\Da_{P}}}="h1" 
!{(6,0) }*+{\bullet}="i"  !{(6,0.4)}*+{{}^{\Da_{4P}}}="i1" 
 !{(8,0)}*+{\bullet}="j"  !{(8,0.4)}*+{{}^{\Da_{9P}}}="j1" 
!{(10,0) }*+{}="w"
 "x"-@{.}"a" "a"-"b" "b"-"c" "c"-"d" "d"-@{.}"y"
 "e"-"f" "f"-@{.}"z" "v"-@{.}"g" "g"-"h" "h"-"i" "i"-"j" "j"-@{.}"w"
"b"-@{=}^b"h" "c"-@{=}^c"e" "e"-@{=}^a"h" "b"-@{=}@`{p+(-1,1)}^d"c"}}
\]
Where the valency table gives us the following system:
$$c+d=1,\quad c+a=p^2+1,\quad a+b=p^2+p+1,\quad b+d=p+1.$$
It follows that either $(a,b,c,d)=(p^2,p+1,1,0)$ or
$(a,b,c,d)=(p^2+1,p,0,1)$.
Assume the second solution. Let $\Ha=\Da_{7Q}^{[P+R]}$,
 where $R$ has degree $4$. Deffine also $\Ha'=\Da_{7Q}^{[R]}$
and $\Ha''=\Da_{7Q}^{[P]}$. One application of Prop. \ref{citedde}
shows that the set of
 maximal orders containing $\Ha'$ is $\{\Da_{7Q},\Da_{3Q},\Da_{11Q}\}$. Now, a 
second aplication of Prop. \ref{citedde} shows that the set of
 maximal orders containing $\Ha'$ is $$A=\{\Da_{7Q},\Da_{3Q},\Da_{11Q},\Da_{2Q},
\Da_{12Q},\Da_{8Q},\Da_{6Q},\Da_{16Q}\}.$$ If we use $\Ha''$ instead, we obtain the set
 $A\cup\{\Da_0\}$.
As these sets are different, we conclude that the second
solution is inconsistent, and therefore $(a,b,c,d)=(p^2,p+1,1,0)$.
\end{ex}

\begin{ex} Let $X$ be as in Example \ref{basic3}, and let $Z$ be the prime divisor corresponding to the point
 $[\alpha;\alpha;1]\in X_{\finitum_{27}}$. Then $\mathrm{div}(x-y)=Z-3P$, so $Z$ is linearly equivalent to $3P$.
Two lines in the local graph at $Z$ are as follows:
\[ \xygraph{
!{<0cm,0cm>;<.8cm,0cm>:<0cm,.8cm>::} 
!{(0,2) }*+{}="a" 
!{(2,2)}*+{\bullet}="b" !{(2,2.5)}*+{{}^{\Da_{4P}}}="b1" 
!{(4,2)}*+{\bullet}="c" !{(4,2.5)}*+{{}^{\Da_{P}}}="c1" 
!{(6,2)}*+{\bullet}="d" !{(6,2.5)}*+{{}^{\Da_{2P}}}="d1" 
!{(8,2)}*+{\bullet}="e" !{(8,2.5)}*+{{}^{\Da_{5P}}}="e1" 
 !{(10,2) }*+{}="f" 
!{(4,1)}*+{\bullet}="g" !{(4,0.5)}*+{{}^{\Da_{0}}}="g1" 
!{(6,1)}*+{\bullet}="h" !{(6,0.5)}*+{{}^{\Da_{3P}}}="h1" 
!{(8,1)}*+{\bullet}="i" !{(8,0.5)}*+{{}^{\Da_{6P}}}="i1" 
 !{(10,1) }*+{}="j" 
"a"-@{.}"b" "e"-@{.}"f" "c"-"b" "c"-"d" "d"-"e"
"i"-@{.}"j" "g"-"h" "h"-"i" "g"-@{=}@`{p+(-1,1)}^a"h" "d"-@{=}^b"h" 
} .
\] 
The double lines denote two of the many possible additional edges. We compute their multiplicity.
First we compute $a$. We use the equation
$$N_P(\Da_0,\Da_P)N_Z(\Da_P,\Da_{4P})=N_Z(\Da_0,\Da_{3P})N_P(\Da_{3P},\Da_{4P}),$$
corresponding to the pair $(\Da_0,\Da_{4P})$. Note that all other terms are $0$, since $\Da_{5P}$ has valency $2$
at $Z$, while neither
$\Da_0$ nor $\Da_{4P}$ has other neighbors at $P$.  This equation gives  
$N_Z(\Da_0,\Da_{3P})=4$, since all other values are known. For instance,
$N_Z(\Da_P,\Da_{4P})=1$ since $\Da_{4P}$ corresponds to the orbit $[\infty]$ at $\Da_P$. 
 Now, since $4$ is the number of elements
in the orbit $[0]$ in case \textbf{C} with $p=3$, this is the only edge connecting $\Da_0$ with $\Da_{3P}$.

Next we show $b=0$. Assume $\Da_{2P}$ and $\Da_{3P}$ are neighbors. Then the equation 
corresponding to the pair $(\Da_{3P},\Da_{3P})$ reduces to 
$$N_P(\Da_{3P},\Da_{2P})N_Z(\Da_{2P},\Da_{3P})=N_Z(\Da_{3P},\Da_{2P})N_P(\Da_{2P},\Da_{3P}),$$ 
since $\Da_{4P}$ and $\Da_{3P}$ are not neighbors at $Z$. The extra edge for either
$\Da_{2P}$ or $\Da_{3P}$ at $Z$ corresponds to an orbit of size $18$, whence the equation gives
$3\times18=18\times1$. The contradiction yields the conclusion.
\end{ex}

\begin{ex}
When $X=\mathbb{P}_1(\finitum)$ and $\mathrm{deg}(P)=6$, the C-graph is as follows:
\[ \fbox{ \xygraph{
!{<0cm,0cm>;<.8cm,0cm>:<0cm,.8cm>::}
 !{(0,2) }*+{}="j1"
 !{(2,2)}*+{\bullet}="a1"  !{(2,2.3)}*+{{}^{\Da_{7P}}}="a11"
 !{(4,2) }*+{\bullet}="b1"  !{(4.5,1.6)}*+{{}^{\Da_{P}}}="b11"
!{(6,2)}*+{\bullet}="c1"  !{(6,2.3)}*+{{}^{\Da_{5P}}}="c11"
 !{(8,2) }*+{\bullet}="d1"  !{(8,2.3)}*+{{}^{\Da_{11P}}}="d11"
!{(10,2)}*+{}="e1"
 !{(4,0) }*+{\bullet}="f1"  !{(4.5,-0.4)}*+{{}^{\Da_{3P}}}="f11"
!{(6,0)}*+{\bullet}="g1"  !{(6,0.3)}*+{{}^{\Da_{8P}}}="g11"
 !{(8,0) }*+{\bullet}="h1"  !{(8,0.3)}*+{{}^{\Da_{13P}}}="h11"
!{(10,0)}*+{}="i1" 
 !{(0,7) }*+{}="j2"
 !{(2,7)}*+{\bullet}="a2"  !{(2,7.3)}*+{{}^{\Da_{8P}}}="a21"
 !{(4,7) }*+{\bullet}="b2"  !{(4.5,6.6)}*+{{}^{\Da_{2P}}}="b21"
!{(6,7)}*+{\bullet}="c2"  !{(6,7.3)}*+{{}^{\Da_{4P}}}="c21"
 !{(8,7) }*+{\bullet}="d2"  !{(8,7.3)}*+{{}^{\Da_{10P}}}="d21"
!{(10,7)}*+{}="e2"
 !{(4,5) }*+{\bullet}="f2"  !{(4.5,4.6)}*+{{}^{\Da_0}}="f21"
!{(6,5)}*+{\bullet}="g2"  !{(6,5.3)}*+{{}^{\Da_{6P}}}="g21"
 !{(8,5) }*+{\bullet}="h2"  !{(8,5.3)}*+{{}^{\Da_{12P}}}="h21"
!{(10,5)}*+{}="i2"
"b1"-@`{p+(-1,1),p+(0,1),p+(1,1)}@{=}^{p^3+p^2}"b1"
"a1"-"b1" "b1"-"c1" "c1"-"d1" "d1"-@{.}"e1" "a1"-@{.}"j1"
"f1"-@`{p+(-1,-1),p+(-1,0),p+(-1,1)}@{=}^1"f1" "f1"-@{=}^{p+1}"b1" "f1"-"g1"
"g1"-"h1" "h1"-@{.}"i1" 
"b2"-@`{p+(-1,1),p+(0,1),p+(1,1)}@{=}^{p+1}"b2"
"a2"-"b2" "b2"-"c2" "c2"-"d2" "d2"-@{.}"e2" "a2"-@{.}"j2"
"f2"-@`{p+(-1,-1),p+(-1,0),p+(-1,1)}@{=}^{p^3-p^2+p}"f2" "f2"-@{=}^{p^2}"b2" "f2"-@{=}_1"c2" "f2"-"g2"
"g2"-"h2" "h2"-@{.}"i2" } }.
\]
The multiplicities can be computed one by one, as in the preceding example, or alternatively, we can use the matrix
\scriptsize
$$ N_6= \left(\begin{array}{cccccccccc}
p^5(p-1)&0&p^5(p-1)&0&p^5(p-1)&0&p^6&0&\cdots\\
0&p^4(p^2-1)&0&p^4(p^2-1)&0&p^6&0&p^6&\cdots\\
p^3(p^2-1)&0&p^3(p^2-1)&0&p^5&0&0&0&\cdots\\
0&p^2(p^2-1)&0&p^4&0&0&0&0&\cdots\\
p(p^2-1)&0&p^3&0&0&0&0&0&\cdots\\
0&p^2&0&0&0&0&0&0&\cdots\\
p+1&0&0&0&0&0&0&0&\cdots\\
0&1&0&0&0&0&0&0&\cdots\\
\vdots&\vdots&\vdots&\vdots&\vdots&\vdots&\vdots&\vdots&\ddots
\end{array}\right).$$ \normalsize
 Note that $M(\Da_0,\Da_0)\equiv p(p-1)\ \big(\mathrm{mod}\ p(p^2-1)\big)$, whence the exceptional edge for $\Da_0$ is in the loop.

\end{ex}


\begin{thebibliography}{99}

\bibitem{spinor}
{\scshape L.E. Arenas-Carmona}, Applications of spinor class
fields: embeddings of orders and quaternionic lattices,
\textit{Ann. Inst. Fourier} \textbf{53} (2003), 2021-2038.



\bibitem{Eichler}
{\scshape L.E. Arenas-Carmona}, \textit{Relative spinor class fields: A counterexample}, Archiv.
Math. \textbf{91} (2008), 486-491..


\bibitem{abelianos}
{\scshape L.E. Arenas-Carmona}, \textit{Representation fields for
commutative orders}, to appear in Ann. Inst. Fourier.
arXiv:1104.1809v1 [math.NT].

\bibitem{Eichler2}
{\scshape L.E. Arenas-Carmona}, \textit{Trees, branches, and
spinor genera}, Preprint. arXiv:1111.1473v1 [math.NT].

\bibitem{ab2}
{\scshape L.E. Arenas-Carmona}, \textit{Representation fields for cyclic orders}, To appear in Acta Arith.


\bibitem{burban}
{\scshape L.Bodnarchuk, I.Burban, Yu.Drozd, G.-M.Greuel}, Vector
bundles and torsion free sheaves on degenerations of elliptic
curves, in \textit{Global Aspects of Complex Geometry}, 83-129,
Springer, (2006), \textit{math.AG/0603261}.

\bibitem{brzezinski87}
{\scshape J. Brzezinski}, Riemann-Roch Theorem for locally
principal orders, \textit{Math. Ann.}, 276 (1987), 529-536.

%








\bibitem{Fulton}
{\scshape W. Fulton}, \textit{Algebraic curves, an introduction to algebraic
geometry}, Addison-Wesley, Redwood City, 1989.





\bibitem{Mason1}
{\scshape  A. W. Mason}, Serre's generalization of Nagao's theorem: an elementary approach. \textit{Trans. Amer. Math. Soc.} \textbf{353.2} (2001), 749-767.

\bibitem{Mason2}
{\scshape  A. W. Mason}, The generalization of Nagao's theorem to other subrings of the rational function field.  \textit{Comm. Algebra} \textbf{31.11} (2003), 5199-5242.

\bibitem{Mason3}
{\scshape  A. W. Mason and A. Schweizer}, The minimun index of a non-congruence subgroup of $\mathrm{SL}_2$ over an arithmetic domain.  \textit{Israel J. Math.} \textbf{133} (2003), 29-44.

\bibitem{Mason4}
{\scshape  A. W. Mason and A. Schweizer}, The minimun index of a non-congruence subgroup of $\mathrm{SL}_2$ over an arithmetic domain II. The rank zero cases.  \textit{ J. London Math. Soc. (2)} \textbf{71} (2005), 53-68.

\bibitem{Mason5}
{\scshape A.W. Mason and A. Schweizer}, Nonrational genus zero function
fields and the Bruhat-Tits tree,  \textit{Comm.
Algebra}, \textbf{37.12} (2009), 4241-4258.



\bibitem{Papikian}
{\scshape M. Papikian}, Local diophantine properties of modular curves of
D-elliptic sheaves, \textit{J. reine und angew. Math}, to appear.
url:http://www.math.psu.edu/papikian/Research/research.html.


\bibitem{rowen}
{\scshape L.H. Rowen}, \textit{Ring Theory}, \textbf{Vol. I}, Academic Press, Boston, 1988.




\bibitem{takahashi}
{\scshape S. Takahashi},  \textit{The fundamental domain of the tree
of $GL(2)$ over the function field of an elliptic curve}, Duke Math. J. \textbf{72.1} (1993), 85-97.

\bibitem{trees}
{\scshape J.-P. Serre},  \textit{Trees}, Springer Verlag, Berlin,
1980.


\bibitem{weil}
{\scshape A. Weil.} \textit{Basic Number Theory}, $2^{\mathrm{nd}}$ Ed.,
 Springer Verlag, Berlin, 1973.
\end{thebibliography}
\end{document}